\renewcommand{\baselinestretch}{1.3}
\newcounter{Fig}[figure]
\newcounter{Tab}[table]
   \samepage\vspace{0.2cm}
\newcommand{\bqa}{\begin{eqnarray*}}
\newcommand{\eqa}{\end{eqnarray*}}
\newcommand{\bqan}{\begin{eqnarray}}
\newcommand{\eqan}{\end{eqnarray}}
\newcommand{\bqt}{\begin{quote}}
\newcommand{\eqt}{\end{quote}}
\newcommand{\bt}{\begin{tabbing}}
\newcommand{\et}{\end{tabbing}}
\newcommand{\bit}{\begin{itemize}}
\newcommand{\eit}{\end{itemize}}
\newcommand{\ben}{\begin{enumerate}}
\newcommand{\een}{\end{enumerate}}
\newcommand{\beq}{\begin{equation}}
\newcommand{\eeq}{\end{equation}}
\newcommand{\bdefi}{\begin{definition}}
\newcommand{\edefi}{\end{definition}}
\newcommand{\bpro}{\begin{proposition}}
\newcommand{\epro}{\end{proposition}}
\newcommand{\blem}{\begin{lemma}}
\newcommand{\elem}{\end{lemma}}
\newcommand{\bth}{\begin{theorem}}
\newcommand{\bco}{\begin{corollary}}
\newcommand{\eco}{\end{corollary}}
\newcommand{\bdes}{\begin{description}}
\newcommand{\edes}{\end{description}}
\newcommand{\bre}{\begin{remark}}
\newcommand{\ere}{\end{remark}}
\newcounter{compteur}
 \newtheorem{definition}{compteur}[section]
\newtheorem{proposition}[compteur]{Proposition}
 \newtheorem{lemma}[compteur]{Lemma}
 \newtheorem{theorem}[compteur]{Theorem}
 \newtheorem{corollary}[compteur]{Corollary}
\newcommand{\indep}{\bot\!\!\!\bot}
\newcounter{assump}
\newtheorem{assum}[assump]{Assumption}
\newtheorem{property}{Property}
\DeclareMathOperator*{\argmin}{arg\,min}
\begin{document}
\renewcommand{\baselinestretch}{1.2}

\title{{\itshape Semiparametric inference for the recurrent events process by means of a single-index model}}
\author{Olivier Bouaziz$^{a}$$^{\ast}$\thanks{Corresponding author$^\ast$. Email: olivier.bouaziz@parisdescartes.fr \vspace{6pt}}, Ségolen Geffray$^{b}$ and Olivier Lopez$^{c,d}$\\\vspace{6pt} $^{a}${\em{MAP5, Universit\'e Paris Descartes, 45 rue des Saints P\`eres, 75270 Paris Cedex 06}}; $^{b}${\em{IRMA, Universit\'e de Strasbourg, 7 rue Ren\'e-Descartes 67084 Strasbourg Cedex}}; $^{c}${\em{ENSAE Paris-Tech \& CREST, Laboratoire de Finance et d'Assurance, 3 avenue Pierre Larousse, 92245 Malakoff Cedex France}}; $^{d}$ {\em{Sorbonne Universités, UPMC Université Paris VI, EA 3124, LSTA, 4 place Jussieu 75005 Paris}}\\}
\maketitle


\begin{abstract}
In this paper, we introduce new parametric and semiparametric regression techniques for a recurrent event process subject to random right censoring. We develop models for the  cumulative mean function and provide asymptotically normal estimators.
Our semiparametric model which relies on a single-index assumption can be seen as a dimension reduction technique that, contrary to a fully nonparametric approach, is not stroke by the curse of dimensionality when the number of covariates is high. 
We discuss data-driven techniques to choose the parameters involved in the estimation procedures and provide a simulation study to support our theoretical results.
\end{abstract}

\vspace{9pt}
\noindent {\it Key words and phrases:}
Asymptotic normality, dimension reduction, empirical processes, recurrent events, right-censoring, single-index model. 
\par

{\small
\parindent 0cm
This work is supported by French Agence Nationale de la Recherche (ANR) ANR Grant ``Prognostic'' ANR-09-JCJC-0101-01.
}\\

\fontsize{10.95}{14pt plus.8pt minus .6pt}\selectfont
\counterwithout{equation}{section}
\counterwithin{equation}{chapter}
\setcounter{chapter}{1}

\noindent {\bf 1. Introduction}

The modeling of recurrent events has become a crucial issue in various application fields of statistical inference such as clinical and epidemiological studies, insurance or actuarial science, in particular in the presence of a terminal event.
Among many examples, one can mention the modeling of asthma, of epileptic seizures in the presence of death or of repeated warranty claims with possibility of contract breaking. 
In these settings, we consider the problem of predicting or identifying the causes which influence the number of such events occurring during a given time period. 
A natural way to measure the impact of covariates on the recurrence of these events consists of estimating the cumulative function of some recurrent event process of interest conditionally on covariates. 
In this paper, our aim consists of developing both parametric and semiparametric inference for regression models suited to the conditional cumulative mean function in the presence of independent right-censoring and terminal event. 

Let $\widetilde N(t)$ represent the number of recurrent events occurring in the time interval $[0,t]$ for $t\geq 0$.
In the literature, various regression models tailored for the recurrent event process $\widetilde N(\cdot)$ have been considered. 
These regression models can be roughly divided into two categories which are Cox-type regression models and accelerated-failure-time (AFT)-type regression models.
Cox-type regression models (see eg Andersen and Gill \citeyearpar{AndersenGill1982} or Sun and Wei \citeyearpar{Sun-Wei-2000})
typically assume that a vector of covariates $Z$ acts on the conditional cumulative mean function through a multiplicative effect on an unspecified baseline function $\mu_0(\cdot)$. 
For instance, Sun and Wei \citeyearpar{Sun-Wei-2000} assume that:
\begin{align}\label{E:Cox}
E[\widetilde N(t)|Z]=\exp(\beta' Z)\mu_0(t) 
\end{align}
where $\beta$ is an unknown vector of parameters.
AFT-type regression models (see eg Lin et al. \citeyearpar{Lin-etal1998}) typically
assume that a vector of covariates $Z$ acts on the conditional cumulative mean function through a time-scaling effect on an unspecified baseline function $\mu_0(\cdot)$:
\begin{align}\label{E:AFT}
E[\widetilde N(t)|Z]=\mu_0(t\exp(\beta' Z))
\end{align}
even though variations of AFT-like regression models exist, for example, in Ghosh \citeyearpar{Ghosh2004} under the form
\begin{align}\label{E:AFT2}
E[\widetilde N(t)|Z]=\exp(-\beta' Z)\,\mu_0(t\exp(\beta' Z)).
\end{align}
However, none of the aforementioned papers takes into account terminal event occurrence or dependent censoring.
One of the most developed inferential approaches in the presence of both dependent and independent censoring can be found in Ghosh and Lin \citeyearpar{GhoshLin2003} where the recurrent event process $\widetilde N(\cdot)$ is modeled by means of AFT-type regression models. 
Frailty and time-dependent covariates extensions of Cox-type regression models have also been considered in Huang and Wang \citeyearpar{Huang-Wang-2004} and Huang et al. \citeyearpar{Huang-etal2010}.

Technically speaking, the main advantage of these kinds of models stands in the simplicity of the regression function. But they unfortunately face the disadvantage (with respect to a purely nonparametric approach) of relying on strong modeling assumptions that may not hold in practice.
However, it turns out that, while allowing full flexibility, the nonparametric approach is known to fail when the number of covariates is high (greater than $3$ in practice) which is the so-called ``curse-of-dimensionality''.
On the other hand, single-index models (see e.g. Ichimura~\citeyearpar{Ichimura93}, H\"ardle et al. \citeyearpar{Hardle1993}, Xia et al. \citeyearpar{xia2002} and Bouaziz and Lopez \citeyearpar{BouazizLopez})  aim to achieve a compromise between a parametric approach and a nonparametric one.  
The basic idea behind this class of models is to assume that the regression function depends on an unknown linear combination of the covariates. 
In our framework, this corresponds to assuming that $E[\widetilde N(t)|Z]=E[\widetilde N(t)|\theta_0'Z]$, for some unknown vector $\theta_0$. 
Hence if this parameter was known, estimating $E[\widetilde N(t)|Z]$ would reduce to a nonparametric problem in dimension one.
This dimension reduction assumption allows to achieve better convergence rates and still ensures enough flexibility to be adapted to a large number of practical cases. 
Moreover, these models can also generalize standard models such as the Cox or AFT regression models.

In this work, we first study a general parametric regression model for a recurrent event process in the presence of independent right-censoring and terminal event. 
We then study a semiparametric generalization which relies on a single-index assumption. 
We propose new procedures to estimate both the index and the conditional cumulative mean regression function and provide a detailed asymptotic study of the proposed estimators.
Compared to uncensored single-index models adapted to mean-regression, see e.g. Ichimura~\citeyearpar{Ichimura93}, the presence of censoring usually deteriorates the quality of estimation in the tail of the distribution. 
Therefore, in our specific setting of regression for recurrent events with censoring and terminal event, we introduce a weight function designed to compensate for the lack of information induced by independent censoring.
The main novelty of our procedure stands in the fact that this weight function may be chosen using data-driven techniques. 
We then discuss a data-driven way of calibrating the parameters involved in the estimation procedures.

The paper is organized as follows. In Section 2\ref{secmodel}, we define the parametric and semiparametric models, introduce the corresponding estimators and explain the general methodology. Asymptotic results for our new estimators are presented in Section 3\ref{secasympt}. Simulation studies are carried out in Section 4\ref{secsimu} to investigate on the performance of our methods for moderate sample size. Technical results are postponed to the Appendix in Section 6\ref{secappend}.\\

\par

\noindent {\bf 2. Model assumptions and methodology} \label{secmodel}

In this section, we present the regression framework. Specifically, Section 2.1\ref{secsecreg} introduces the different regression models considered in this paper. 
Section 2.2\ref{secsecest} presents the estimation procedures. 
They are based on a least-squares type criterion involving a rescaled process and a weighting measure introduced in Section 2.2.1\ref{secsecsecresc} which enable to correct the impact of censoring. 
The parametric case is dealt with in Subsection 2.2.2\ref{secsecsecpara} and the semiparametric case is treated in Subsection 2.2.3\ref{secsecsecsemi}.\\

\noindent {\bf 2.1 Regression models for the recurrent event process} \label{secsecreg}
\setcounter{chapter}{2}
\setcounter{equation}{0} 

Consider the process $\widetilde N(\cdot)$ where we recall that $\widetilde N(t)$ stands for the number of recurrent events occurring in the time interval $[0,t]$ for any $t\geq 0$. 
Let $D$ denote the random variable representing the time until occurrence of a terminal event. In clinical applications, this variable $D$ may stand for the death time of a patient. 
For insurance applications, $D$ can represent the warranty length (which can be random if the client has the possibility of breaking the contract) or the lifetime of the insured good.  
Then introduce the recurrent event process $N^*(\cdot)=\widetilde N(\cdot \wedge D)$ (where $a\wedge b$ denotes the infimum between $a$ and $b$) which can be seen as a piecewise constant function with jumps only on $[0,D]$.
In this paper, we aim to infer on the cumulative conditional mean function given for $t\geq 0$ by
$$\mu(t|z)=E\left[N^*(t)|Z=z\right]$$ 
where $Z$ is a $d-$dimensional vector of covariates.
Note that  our interest is focused on the process $N^*(\cdot)$ and not on $\widetilde N(\cdot)$. 
The role of the variable $D$ is to stop the process $\tilde N$ but, on the opposite of Ghosh and Lin \citeyearpar{GhoshLin2003} for instance, we are not interested in modeling its distribution. 
Note also that no assumptions regarding the dependence between $N^*$ and $D$ are made. 
Our setting is similar to the one of Ghosh and Lin \citeyearpar{GhoshLin2000},  Dauxois and Sencey \citeyearpar{DauxoisSencey09} or Bouaziz et al. \citeyearpar{Bouazizetal13}. 

We now present the two different models for $\mu$ that are studied throughout this paper.\\

{\bf Model 1 : parametric case.}
\begin{equation}
\label{modelpara} \mu(t|z) = \mu_0(t,z;\theta_0),
\end{equation}
where $\theta_0$ is unknown in some parameter space $\Theta\subset \mathbb{R}^{d'}$ (where $d'$ may be different from $d$) and $\mu_0$ is a known function.\\

{\bf Model 2 : semiparametric case.}
\begin{equation}
\label{modelsemipara} \mu(t|z) = \mu_{\theta_0}(t,\theta_0'z),
\end{equation}
where $\theta_0$ is unknown in some parameter space $\Theta\subset \mathbb{R}^d$, where \\$\mu_{\theta}(t,u)=E[N^*(t)|\theta 'Z=u]$ and where the family
of functions $\mathcal{F}=\{\mu_{\theta}:\theta\in \Theta\}$ is unknown. 
We impose that the first component of $\theta_0$ is 1 to identify this parameter. 
Indeed, if we do not make any assumption on $\theta_0$, this parameter would only be defined up to a multiplicative constant. Note that another equivalent condition would consist of imposing that $\theta_0$ is of norm 1 for any given norm on $\mathbb{R}^d.$
Model 2 is a single-index model (see e.g. H\"ardle et al. \citeyearpar{Hardle1993}), since it consists of assuming that $\mu(t|z)$ depends on the covariates only through a linear combination $\theta_0'z.$ 
In the notation that we introduce in (\ref{modelsemipara}) and that we will use throughout this paper, we emphasize the fact that the function $\mu$ depends on $\theta$ in two different ways. 
The notation $\mu_{\theta}$ indicates that, for two different values of $\theta,$ the conditional distribution that one is considering is not the same. 
Indeed, the distribution of $N^*(\cdot)$ conditionally on $\theta_1'Z$ is, in general, not the same as the distribution of $N^*(\cdot)$ conditionally to $\theta'_2Z$ for two different vectors $\theta_1$ and $\theta_2.$ 
Moreover, this function $\mu_{\theta_0}(t,u)$ is evaluated at the point $\theta_0'Z.$
This distinction is essential to obtain some crucial properties of single-index models, and explains why the vector of partial derivatives with respect to $\theta$ has a relatively complex form (see Lemma 5 in Supplementary material). 
To ensure consistency of single-index approaches, a continuity property of the map $\theta\rightarrow \mu_{\theta}(\cdot,\cdot)$ (which is a map from $\Theta$ towards some space of functions) will be required.
 To illustrate the notations, consider the particular case where
\begin{equation*}E[N^*(t)|Z]=(\theta_0'Z+5)\mu_0(t)\end{equation*}
for some function $\mu_0(\cdot)$.
A quick computation shows that 
\begin{align*}
E\left[N^*(t)|\theta'Z\right] &= E\left[E[N^*(t)|Z]|\theta'Z\right] \\
&= (\theta_0'E[Z|\theta'Z]+5)\mu_0(t),
\end{align*}
which gives $$\mu_{\theta}(t,u)=(\theta'_0E[Z|\theta'Z=u]+5)\mu_0(t).$$
\\

The appealing feature of Model 1 stands in the simplicity of the regression function.
However, like every parametric procedure, it relies on strong assumptions which have few chances to hold in practice.
On the opposite, a fully nonparametric procedure requires fewer assumptions but suffers from the so-called ``curse of dimensionality'' when the number of covariates is high. Therefore, Model 2 appears as a good compromise between the parametric approach and the nonparametric one. Indeed it is more flexible than a fully parametric one but is not stroke by the curse of dimensionality since it relies on a dimension reduction assumption.
Moreover, Model 2 can be seen as a generalization of the models exposed in equations (\ref{E:Cox}) to (\ref{E:AFT2}). \\

One does not generally observe $N^*(\cdot)$ on the whole time interval $[0,D]$ because the random variable $D$ is subject to right-censoring. Let $C$ be a positive random variable standing for the censoring time. The observation time $T$ is then given by $T=D\wedge C$. Hence, instead of observing $N^*(t)$ for $t\in [0,D]$, one only observes $N(t)=N^*(t\wedge C)$ for $t\in [0,D]$. Letting $\delta=I(D\leq C)$, the observations consist of
$n$ i.i.d. replications $(T_i,\delta_i,Z_i,N_i(\cdot))_{1\leq i\leq n}$ of $(T,\delta,Z,N(\cdot))$.
Let us introduce the cumulative distribution functions of the observed variables in the censored data model:
\[\begin{cases}
H(t) = P(T\leq t),\\
F(t) = P(D\leq t),\\
G(t) = P(C\leq t).
\end{cases}\] 
We also define $\tau_H=\inf\{t:H(t)=1\}$ the right endpoint of the support of the random variable $T$.
In the sequel, we need the two following assumptions to identify these distribution functions.

\begin{assum}\label{sym}
For a counting process $L$ let us denote $dL(t)=L(t)-L(t-)$ (where $L(t-)=\lim_{u\rightarrow t,u<t} L(u)$) the jump of process $L$ at time $t$. Assume that
\[\begin{cases}
P\big(dN^*(C)\neq 0\big)=0,\\
P(D=C)=0.
\end{cases}\]
\end{assum}
This is a common assumption in the context of recurrent events which prevents us from ties between the occurrence times of death, censoring and recurrent events. 

\begin{assum}\label{symrec}
We write down $A\indep B$ when two random variables $A$ and $B$ are independent. Assume that
\[
\begin{cases}
C \indep (N^*(\cdot),D),\\
P(C\leq t|N^*(\cdot),Z,D)=P(C\leq t|N^*(\cdot),D) \text{ for } t\in [0,\tau_H].\end{cases}
\]
\end{assum}
Assumption 2 holds in the particular case where $C$ is independent of \\$(N^*(\cdot),D,Z)$ but is slightly more general since it does not require the independence between $C$ and $Z.$ 
Similar assumptions are often considered in the literature on the Kaplan-Meier estimator for the survival distribution function with covariates. 
To study the Kaplan Meier estimator, Stute~\citeyearpar{Stute93} assumed that $P(C\leq t|Z,D)=P(C\leq t|D)$ and $C \indep D$ (see the discussion in Stute~\citeyearpar{Stute93} for this assumption). 
Our Assumption \ref{symrec} is the natural extension of the assumption proposed by Stute~\citeyearpar{Stute93}, but now in the presence of a recurrent event process $N^*$.

Alternatively, one could assume that $C\indep (N^*(\cdot),D)$ conditionally on $Z.$ 
Using such an assumption instead of Assumption \ref{symrec} would require to modify the approach described below, by replacing the Kaplan-Meier estimator of the distribution of $C$ by a conditional Kaplan-Meier estimator as the one proposed by Beran \citeyearpar{Beran} and studied by Dabrowska \citeyearpar{Dabrowska}. 
In the following, we do not focus on the theoretical behaviour of this modification, which is left to future research.\\
\par

\noindent {\bf 2.2 Estimation procedure} \label{secsecest}\\
\noindent {\bf 2.2.1 Heuristics for the rescaled process and the weighting measure} \label{secsecsecresc}

Our objective is to estimate $E[N^*(t)|Z]$ successively under Model 1 and Model 2 from the i.i.d. sample $(T_i,\delta_i,Z_i,N_i(\cdot))_{1\leq i\leq n}$.
In our regression framework, going back to the definition of the conditional expectation, it is quite natural to perform estimation of $E[N^*(t)|Z]$ using minimization of a least-squares-type criterion both in Model 1 and Model 2. 
With this method in mind, consider a least-squares criterion which is integrated over $[0,\tau_H)$ to control the trajectory of the process of interest over this time interval. This gives the following criterion, say under Model 1,
$$ \int_0^{\tau_H} E\left[\left(\mu_0(t,Z;\theta)-N^*(t) \right)^2 \right]dt$$
which is to be minimized with respect to $\theta$.
One of the difficulties we face when estimating the conditional expectation of $N^*(t)$ is that the process $N^*(\cdot)$ is not directly observed because of censoring. 
Hence, empirical versions of criteria like the above one can not be computed since they rely on $N^*(\cdot)$.  
To circumvent this difficulty we introduce a rescaled process $Y(\cdot)$ which is designed to compensate the censoring effects.
We define for any $t$ in $[0,\tau_H)$
\begin{equation} \label{rescale} 
Y(t) = \int_0^{t} \frac{dN(s)}{1-G(s-)}.
\end{equation}
The logic behind this rescaled process is similar to the approach used by Leurgans \citeyearpar{Leurgans-1987} in a censored regression framework.
In the definition (\ref{rescale}), the denominator is decreasing when $s$ grows to infinity. 
This means that we give more weight to the events we observe when $s$ is large and compensate for the lack of observations due to censoring for large $s$.
To go further in the definition of our least-squares criterion, notice that under Assumptions \ref{sym} and \ref{symrec}, we have for any $s$ in $[0,\tau_H)$
\begin{align}\label{NNstar}
E[dN(s)|Z]&=E[dN^*(s\wedge C)|Z]\nonumber \\
								&=E[dN^*(s)I(s\leq C)|Z]\nonumber\\
								&=E[dN^*(s)|Z](1-G(s-))
\end{align}
so that 
$$E[Y(t)|Z]=E[N^*(t)|Z].$$
The consequence is that we can now consider a modified least-squares criterion based on the estimated rescaled process $Y(\cdot)$, say again under Model 1,
$$ \int_0^{\tau_H} E\left[\left(\mu_0(t,Z;\theta)-Y(t) \right)^2 \right] dt,$$
that is the integrated squared error (see for example Bowman \citeyearpar{Bowman1984} in the context of cross validation for kernel estimators).
The other difficulty we have to face is that we have to ensure the finiteness of our least-squares criterion which is not guaranteed with the above definition. To circumvent this other difficulty, we will use a weighting measure $w$ specifically designed to ensure the finiteness of our criterion so that our criterion to be minimized will be of the form
$$ \int_0^{\tau_H} E\left[\left(\mu_0(t,Z;\theta)-Y(t) \right)^2 \right] dw(t).$$
We now consider in details separately the parametric case and the semiparametric case in the two subsections to come.\\

\noindent {\bf 2.2.2 The parametric case} \label{secsecsecpara}

Suppose that Model 1 is satisfied.
Let $w$ denote a measure such that\\$w\big([0,\infty)\big)<\infty$ and such that the quantity
\[M_w(\theta,\mu_0)= \int_0^{\tau_H} E\big[\mu_0(t,Z;\theta)^2\big]dw(t)-2\int_0^{\tau_H} E\big[Y(t)\mu_0(t,Z;\theta)\big]dw(t)\]
is finite. 
Let us notice that $\mu_0(\cdot,\cdot;\theta)$ and $Y(\cdot)$ may tend to infinity when $t\to \tau_H$. 
This remark leads us to introduce a supplementary condition on $w$ distinct from the fact that this measure has a finite total mass.
The true parameter value $\theta_0$ satisfies
\begin{equation}
\label{idealpara}
\theta_0=\argmin_{\theta \in \Theta} M_w(\theta,\mu_0)
\end{equation}
regardless of the choice of $w$.
To estimate $\theta_0$, it is natural to replace the function $M_w$ by an empirical version.
However, the rescaled process $Y(\cdot)$ can not be computed in practice since it relies on the distribution function $G$ which is usually unknown. To circumvent this other difficulty, we introduce an empirical counterpart of $Y(\cdot)$.
Let $T_{(n)}$ denote the last order statistic of the sample $(T_i)_{i=1,...,n}$. The distribution function $G$ can be consistently estimated on $[0,T_{(n)}]$ by the Kaplan-Meier estimator of $G$ denoted by $\hat{G}$ and given for $t$ in $[0,T_{(n)}]$ by
\[
\hat{G}(t) = 1-\prod_{i:T_i\leq t}\left(1-\frac{1}{\sum_{j=1}^nI(T_j\geq T_i)}\right)^{1-\delta_i}.
\]
Consequently, the process $Y(\cdot)$ can itself be estimated for $t$ in $[0,T_{(n)}]$ by
\begin{equation} \label{rescaleest} 
\hat{Y}(t) = \int_0^{t} \frac{dN(s)}{1-\hat{G}(s-)}.
\end{equation}
The empirical version of $M_w$ considered here is then
\[M_{n,w}(\theta,\mu_0)= \frac{1}{n}\sum_{i=1}^n\int_0^{T_{(n)}} \mu_0(t,Z_i;\theta)^2dw(t)-\frac{2}{n}\sum_{i=1}^n\int_0^{T_{(n)}} \hat{Y}_i(t)\mu_0(t,Z;\theta)dw(t).\]
This allows us to define an estimator of $\theta_0$ as
\begin{equation}\label{estimpara}
\hat{\theta}(w)=\argmin_{\theta \in \Theta}M_{n,w}(\theta,\mu_0).
\end{equation}
In the above definition, we emphasize the fact that this estimator depends on the choice of the measure $w.$
This measure $w$ is an important feature of our procedure. First, in some situations, the statistician may
wish to give more weight to some time intervals which are of higher importance. Moreover, the measure
$w$ is also useful to control the rescaled process. Indeed, in Equation (\ref{rescaleest}), the denominator 
goes to zero when $s$ grows large and $w$ can be precisely designed to avoid the practical problems caused by
these too small denominators. Therefore, the finite sample behaviour of our estimation procedure
strongly relies on a wise choice of the measure $w.$

The asymptotic results derived in Section 3\ref{secasympt} allow us to obtain asymptotic representations of $\hat{\theta}(w)$ as a process indexed by $w$ which hold uniformly in $w\in \mathcal{W}$ where $\mathcal{W}$ is a set of measures in which the statistician plans to choose $w.$ 
We discuss in Section 3.4\ref{secsecadaptw} the adaptive choice of $w$.\\

\noindent {\bf 2.2.3 The semiparametric case} \label{secsecsecsemi}

Suppose that Model 2 is satisfied.
In the semiparametric case, the family of functions $\mu_{\theta}$ is unknown. However,
the criterion used for the parametric case can be slightly modified to estimate $\theta_0$. We can write
\begin{equation}
\nonumber
\theta_0=\argmin_{\theta \in \Theta} M_w(\theta,\mu_{\theta}),
\end{equation}
where 
$$M_w(\theta,\mu_{\theta})= \int_0^{\tau_H} E\big[\mu_{\theta}(t,\theta 'Z)^2\big]dw(t)-2\int_0^{\tau_H} E\big[Y(t)\mu_{\theta}(t,\theta 'Z)\big]dw(t)$$
and where $w$ is now chosen such that $M_w(\theta,\mu_{\theta})<\infty$ in addition to having finite total mass.

Using a family of nonparametric estimators $\hat{\mu}_{\theta}$ of $\mu_{\theta}$, we define the estimator of $\theta_0$ as
\begin{equation} \label{estimsemipara}
\hat{\theta}(w)=\argmin_{\theta\in \Theta}M_{n,w}(\theta,\hat{\mu}_{\theta}),
\end{equation}
where 
\[M_{n,w}(\theta,\hat{\mu}_{\theta})=n^{-1}\sum_{i=1}^n\int_0^{T_{(n)}} \hat{\mu}_{\theta}(t,\theta 'Z_i)^2dw(t)-2n^{-1}\sum_{i=1}^n\int_0^{T_{(n)}} \hat{Y}_i(t)\hat{\mu}_{\theta}(t,\theta 'Z_i)dw(t).\] 
We give indications in Section 2.2.4\ref{secrajout} on how to perform the minimization 
of such a contrast in practice.
In Section 3.3\ref{secsecsemiparam}, we derive
an asymptotic representation of $\hat{\theta}(w)$ (see Theorem \ref{theoremsemipara}) regardless of the type of nonparametric estimators
$\hat{\mu}_{\theta}$ used in the computation and provided these nonparametric estimators satisfy a list of 
uniform convergence conditions. Nevertheless, let us give a precise example of $\hat{\mu}_{\theta}$ using kernel estimators.
The convergence properties of this type of estimator are derived in Section 6.3\ref{secpreuvenonpara}.

Using the same arguments as in (\ref{NNstar}), we have from the identifiability Assumptions \ref{sym} and \ref{symrec},
\begin{equation}
\mu_{\theta}(t,u)=\int_{0}^t \frac{E[dN(s)|\theta 'Z=u]}{1-G(s-)}.\label{reecmu}
\end{equation}
We estimate the numerator in (\ref{reecmu}) using a kernel estimator and the denominator
by the Kaplan-Meier estimator $\hat{G},$ leading to
\begin{equation}
\label{kernel}
\hat{\mu}_{\theta,h}(t,u)=\int_{0}^t \frac{\sum_{i=1}^nK\left(\frac{\theta 'Z_i-u}{h}\right)dN_i(s)}{\sum_{j=1}^n K\left(\frac{\theta 'Z_j-u}{h}\right)\big(1-\hat{G}(s-)\big)},
\end{equation}
where $K$ is a kernel function and $h$ a bandwidth sequence going to zero.
In Section 6.3\ref{secpreuvenonpara}, we list some conditions on $K$ and $h.$
How to choose the bandwidth from the data in practice is considered at the end of Section 3.6\ref{secsecresnonpara}.

It is important to mention that, in this semiparametric approach, knowledge of the family of functions $\{\mu_{\theta}:\theta\in \Theta\}$ is never required for computing the estimator, since these functions are replaced by nonparametric estimators.
This family of functions will only appear in the theoretical validation of the procedure, as the limit of the estimators $\hat{\mu}_{\theta}.$\\

\noindent {\bf 2.2.4 Minimization of the contrast (\ref{estimsemipara})}\label{secrajout}

The contrast $M_{n,w}(\theta,\hat{\mu}_{\theta})$ can be tricky to minimize in practice, since it
depends on nonparametric estimators. A first possibility consists of using iterative algorithms (see e.g. Xia et al. \citeyearpar{xia2002}).
Another possibility is to use a direct maximization as the one described by H\"ardle et al. \citeyearpar{Hardle1993}
or Delecroix et al. \citeyearpar{Delecroix06} in the case of single-index mean regression. This technique is particularly suited to the 
use of kernel estimators, which depend on a bandwidth parameter $h$ (as the ones described in Section 3.6\ref{secsecresnonpara}
and used in Section 4\ref{secsimu}). 
To emphasize this dependence, we will use the notation $\hat{\mu}_{\theta,h}.$

In this case, the function $M_{n,w}(\theta,\hat{\mu}_{\theta,h})$ can be seen as a function of both $\theta$ and $h.$
H\"ardle et al. \citeyearpar{Hardle1993} proposed to choose jointly $\hat{\theta}$ and an adaptive bandwidth $\hat{h}$
by taking $(\hat{\theta},\hat{h})=\arg \min_{\theta\in \Theta,h\in \mathcal{H}}M_{n,w}(\theta,\hat{\mu}_{\theta,h}),$
where $\mathcal{H}$ denotes a set of bandwidths among which one wishes to select the most appropriate. In Section 4\ref{secsimu},
we use a finite grid of bandwidths $\mathcal{H},$ so that minimizing this contrast with respect to $h$ does not raise any additional technical issue. In the case of mean-regression, H\"ardle et al. \citeyearpar{Hardle1993} have shown that $\hat{h}$ obtained using this technique is asymptotically equivalent to the (uncomputable) $h^*$ obtained using cross-validation if we had an exact knowledge
of the nonparametric part. 
The same result is proved in our context, in Section 3.6.

Nevertheless, the question of initializing the minimization algorithm with the proper starting point
is more delicate. In practice, one may use the average derivative technique, see  Powell et al. \citeyearpar{pss} in the case of mean-regression.
The main advantage of this technique is that it produces closed formulas to compute an estimator of the index $\theta_0.$ 
Therefore, the average derivative technique is often used in order to provide starting points (see e.g. Delecroix et al. \citeyearpar{dhh}). 
Denoting $\nabla_{z}\mu(t|z)$ the vector of partial derivatives of $\mu$ with respect to $z,$ and $\mu'_{\theta_0}(t,u)=\partial_u \mu_{\theta_0}(t,u),$ it follows from the single-index assumption that $\nabla_{z}\mu(t|z)=\theta_0 \mu'_{\theta_0}(t,\theta_0'z).$ Hence, $\int_0^{\tau_H} \nabla_{z}\mu(t|z)dw(t)$ is colinear to $\theta_0.$
This quantity can be estimated by $\int_0^{T_{(n)}} \nabla_{z}\tilde{\mu}(t|z)dw(t),$ where $\tilde{\mu}$ is a nonparametric estimator of $\mu(t|z)$ (which does not take the single-index assumption into account). Computing
$$\beta_n=\frac{1}{n}\sum_{i=1}^n \int_0^{T_{(n)}} \nabla_{z}\tilde{\mu}(t|Z_i)dw(t),$$
this quantity should be close to be colinear to $\theta_0$ provided that $\tilde{\mu}$ is consistent. 
Then, one can compute $\hat{\theta}_{prel}=\beta_n/\beta_{n,1},$  where $\beta_{n,1}$ denotes the first component of $\beta_n.$
This preliminary estimator $\hat{\theta}_{prel},$ has a first component equal to one, as required.

As an estimator $\tilde{\mu},$ one can use
$$
\tilde{\mu}(t|z)=\int_{0}^t \frac{\sum_{i=1}^n\tilde{K}\left(\frac{Z_i-z}{h_0}\right)dN_i(s)}{\sum_{j=1}^n \tilde{K}\left(\frac{ Z_j-z}{h_0}\right)\big(1-\hat{G}(s-)\big)},$$
where $\tilde{K}$ is a (multivariate) kernel function, and $h_0$ a preliminary bandwidth. As every nonparametric estimator in high dimension, the rate of convergence of this estimator may be slow, but it will still be consistent. Moreover, the fact that $\beta_n$
is computed from a mean of nonparametric estimator improves the quality of this preliminary estimation.
To simplify the notations we use the same bandwidth  $h_0$ for each component of $Z_j$ but different bandwidths may be used in practice.
\\

\setcounter{chapter}{3}
\setcounter{equation}{0} 
\noindent {\bf 3. Asymptotic results} \label{secasympt}

In this part, we provide asymptotic properties for our estimators.
In Section 3.1\ref{secseclemma}, we expose our main lemma, which is the keystone of our theoretical results. In the next two sections we give asymptotic representations of $\hat{\theta}(w)$ for the parametric and semiparametric models. We then discuss the adaptive choice of the measure $w$ in order to improve the performance of our procedure in Section 3.4\ref{secsecadaptw}. The variance of the limiting process is estimated in Section 3.5\ref{secsecvariance} and the choice of the bandwidth $h$ in (\ref{kernel}) is highlighted in Section 3.6\ref{secsecresnonpara}.

All these results are presented for both models 1 and 2 and a large class of measures $\mathcal W$. The technical assumptions needed for the estimation procedures are listed in Section 6.1\ref{secsecdisc}.
In particular, our results hold true if we consider a class $\{\mu_{\theta}(\cdot,\cdot),\theta \in \Theta\}$ (or $\{\mu_0(\cdot,\cdot;\theta),\theta \in \Theta\}$ in the parametric case) of polynomial functions and if we take $\mathcal W$ as a set of piecewise constant bounded measures with a finite number of jumps.
However, notice that the assumptions presented in Section 6.1\ref{secsecdisc} are more general and correspond to a larger area of practical situations.\\

\noindent {\bf 3.1 The main lemma} \label{secseclemma}
\setcounter{chapter}{3}
\setcounter{section}{0}
\setcounter{equation}{0} 

From a theoretical viewpoint, the main issue stands in studying the difference between $Y$ and its estimated version. The following lemma provides an asymptotic representation for a class of empirical sums in which the process $\hat{Y}$ is involved.

Such kind of asymptotic representations have become very valuable tools for inference in survival analysis, since they allow to transform a non i.i.d. quantity into an other one that can be easily studied using the central limit theorem. See e.g. Stute~\citeyearpar{Stute95}, Van Keilegom and Akritas~\citeyearpar{Van99}, S\'anchez Sellero et al. \citeyearpar{Sanchez05} or Lopez~\citeyearpar{Lopez09} for some similar results in other frameworks.

\begin{lemma} \label{lemmarescale}
Let $\mathcal{F}$ be a class of functions with bounded envelope $\bar F$ satisfying Property \ref{avc}
and assume that Assumptions \ref{aw} and \ref{holder} hold. Define, for any function $f\in \mathcal{F},$
\[S_n(f,w)=\frac{1}{n}\sum_{i=1}^n \int_{0}^{\tau_H}Y_i(t)f(t,Z_i)dw(t)\]
and
\[\hat{S}_n(f,w)=\frac{1}{n}\sum_{i=1}^n \int_{0}^{T_{(n)}}\hat{Y}_i(t)f(t,Z_i)dw(t).\]
\begin{enumerate}[(1)]
\item
Assume that $\sup_{w\in \mathcal{W}}E[S_n(\bar F,w)]<\infty.$
Let 
\[\mathcal I_w(T_i,\delta_i,f)=\int_0^{\tau_H} \int_0^t \eta_{s-}(T_i,\delta_i)E[f(t,Z)d\mu(s|Z)]dw(t)\]

where 
\begin{align*}
d\mu(s|Z) & =\frac{\partial \mu(s|Z)}{\partial s} ds\\
\eta_t(T,\delta) & =\frac{(1-\delta)I(T\leq t)}{1-H(T-)}-\int_0^{t}\frac{I(T\geq s)dG(s)}{[1-H(s-)][1-G(s-)]}
\end{align*}
Then, for all $f\in \mathcal{F},$
\[\hat{S}_n(f,w)-S_n(f,w)=\frac{1}{n}\sum_{i=1}^n \mathcal I_w(T_i,\delta_i,f)
+R_n(f,w),\]
where 
\begin{align*}
\sup_{w\in \mathcal{W},f \in \mathcal{F}}|R_n(f,w)| & =o_P(n^{-1/2}).
\end{align*}
Moreover, if the measures $w$ are all supported in $[0,\tau]$ for some $\tau<\tau_H$, then 
\[\sup_{w\in \mathcal{W},f \in \mathcal{F}}|R_n(f,w)|=O_{P}(n^{-1}\log n).\]
\item 
If $\hat{f}$ denotes a family of nonparametric estimators of functions $f\in \mathcal{F}$ satisfying\\$\sup_{f\in \mathcal{F}}\|\hat{f}-f\|_{\infty}=o_P(1),$ then 
\[\sup_{w\in \mathcal{W}}|\hat{S}_n(\hat{f},w)-\hat{S}_n(f,w)|=o_P(n^{-1/2}).\]
Moreover, if the measures $w$ are all supported in $[0,\tau]$ for some $\tau<\tau_H$, then 
\[\sup_{w\in \mathcal{W}}|\hat{S}_n(\hat{f},w)-\hat{S}_n(f,w)|=O_P(n^{-1}\log n).\]
\end{enumerate}
\end{lemma}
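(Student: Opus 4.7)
The plan is to linearise the difference $\hat Y_i - Y_i$ as a functional of $\hat G - G$, invoke a Stute-type iid expansion of the Kaplan--Meier estimator, reduce the resulting double sum to a single sum by a uniform law of large numbers on the inner $i$-index, and then promote the pointwise representation to one holding uniformly over $\mathcal{F}\times\mathcal{W}$ using the empirical-process tools implicit in Properties \ref{avc}--\ref{lipsch} and Assumption \ref{holder}.

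First I would write
\[
\hat Y_i(t) - Y_i(t) = \int_0^t \frac{\hat G(s-)-G(s-)}{(1-\hat G(s-))(1-G(s-))}\,dN_i(s),
\]
so that $\hat S_n(f,w) - S_n(f,w)$ equals this ratio integrated against $f(t,Z_i)\,dw(t)$, plus a boundary contribution from the upper limit $T_{(n)}$ versus $\tau_H$ that is $o_P(n^{-1/2})$ uniformly by the tail conditions of Assumption \ref{aw}. I would then substitute the Stute (1995) representation of the Kaplan--Meier estimator,
\[
\frac{\hat G(s-)-G(s-)}{1-G(s-)} \;=\; \frac{1}{n}\sum_{j=1}^n \eta_{s-}(T_j,\delta_j) \;+\; r_n(s), \qquad \sup_{s<\tau_H}|W_1(s)r_n(s)|=o_P(n^{-1/2}),
\]
which converts the leading piece into a double sum $n^{-2}\sum_{i,j}$. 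After swapping summation, the inner $i$-average $n^{-1}\sum_i f(t,Z_i)\,dN_i(s)/(1-G(s-))$ can be replaced by its limit $E[f(t,Z)\,d\mu(s|Z)]$ via a Glivenko--Cantelli argument based on Property \ref{aclasse}, using the key identity $E[dN(s)|Z]=(1-G(s-))\,d\mu(s|Z)$ from (\ref{NNstar}). Since the outer sum $n^{-1}\sum_j \eta_{s-}(T_j,\delta_j)$ is mean-zero and therefore $O_P(n^{-1/2})$, this replacement costs only a product $O_P(n^{-1/2})\cdot o_P(1) = o_P(n^{-1/2})$, and combined with the Stute remainder this yields the stated representation with $R_n = o_P(n^{-1/2})$.

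The hard part is the uniformity over $\mathcal{F}\times\mathcal{W}$: this will be handled by combining the VC structure of $\mathcal{F}_\tau$ (Property \ref{avc}), the H\"older moment bound (Assumption \ref{holder}) and the Lipschitz control (Property \ref{lipsch}) to derive bracketing/entropy estimates for the empirical processes involved, while the factorisation $W_0 = W_1 W_2$ of Assumption \ref{aw} separates Kaplan--Meier divergence near $\tau_H$ (tamed by $W_1$ via condition (1)) from integrability of the rescaled process (tamed by $W_2$ via (2)--(3)). In the compact-support case $\tau<\tau_H$, the classical Major--Rejt\H{o}-type bound $\sup_{s\le\tau}|r_n(s)| = O_P(n^{-1}\log n)$ on the Kaplan--Meier remainder plugs in directly and upgrades $R_n$ to $O_P(n^{-1}\log n)$. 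For Part (2) I would apply Part (1) to the random $\hat f$, using the uniform bound already obtained over $\mathcal{F}$ to justify the plug-in; writing $\hat f = f + (\hat f - f)$ in the iid leading term, the contribution from $\hat f - f$ is a product $O_P(n^{-1/2})\cdot o_P(1) = o_P(n^{-1/2})$, and the delicate final step will be to secure this absorption uniformly in $w\in\mathcal{W}$, again relying on the weight conditions of Assumption \ref{aw}.
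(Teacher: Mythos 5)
Your overall architecture matches the paper's: both proofs linearise $\hat Y_i-Y_i$ through $\hat G-G$, insert the Gijbels--Veraverbeke/Stute i.i.d.\ expansion of the Kaplan--Meier estimator, and treat the $T_{(n)}$-boundary and the tail via Assumption \ref{aw}. But there is a genuine gap at the central step. You replace the inner average $n^{-1}\sum_i f(t,Z_i)\,dN_i(s)/(1-G(s-))$ by $E[f(t,Z)\,d\mu(s|Z)]$ at a claimed cost of ``$O_P(n^{-1/2})\cdot o_P(1)$''. This product-of-rates bound does not go through: the error is $\int_0^t B_n(s)\,A_n(ds,t)$ with $B_n(s)=n^{-1}\sum_j\eta_{s-}(T_j,\delta_j)$ and $A_n$ the \emph{centred signed measure} $n^{-1}\sum_i f(t,Z_i)\,dN_i(s)/(1-G(s-))-E[f(t,Z)\,d\mu(s|Z)]$. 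Since $A_n$ is the difference between an atomic and a generally continuous measure, its total variation is $O_P(1)$, not $o_P(1)$; integration by parts only trades $\sup_s|B_n|=O_P(n^{-1/2})$ against $\mathrm{TV}(A_n)=O_P(1)$, or $\sup_s|A_n((0,s],t)|=o_P(1)$ against $\mathrm{TV}(B_n)=O_P(1)$, and neither yields $o_P(n^{-1/2})$. The paper instead writes this cross term as $n^{-2}\sum_{i,j}\psi^{f,t}(Z_i,N_i,T_j,\delta_j)$ with $\psi^{f,t}$ centred in the $i$-argument, uses the Lipschitz estimate in $(f,t)$ --- this is precisely where the H\"older condition of Assumption \ref{holder} enters, an assumption your sketch never uses --- to show $\{\psi^{f,t}\}$ is a VC class, and applies Sherman's (1994) maximal inequality for degenerate $U$-processes to obtain $O_P(n^{-1})$ uniformly. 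Without this, you cannot reach the $O_P(n^{-1}\log n)$ rate asserted for compactly supported $w$ (your bound for this term would be at best $o_P(n^{-1/2})$), and even the $o_P(n^{-1/2})$ claim remains unproved.

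A second, related issue is your appeal to a weighted Kaplan--Meier representation with $\sup_{s<\tau_H}|W_1(s)r_n(s)|=o_P(n^{-1/2})$ holding up to the endpoint $\tau_H$. The representation actually available holds only on compacts $[0,\tau]$ with $\tau<\tau_H$, with remainder $O_P(n^{-1}\log n)$ there. The paper accordingly establishes the expansion for each truncated measure $w_\tau$ and then lets $\tau\to\tau_H$, controlling $|\hat P_n^{\tau}(f,w)-P_n^{\tau}(f,w)|$ via Fubini, Gill's whole-line bounds for $\hat G$, the factorisation $W_0=W_1W_2$ of Assumption \ref{aw}, and the tightness Lemma \ref{lemmatightness}. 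You would need either to prove the weighted whole-line representation you assume (nontrivial) or to add this truncation-and-tightness step explicitly. Your treatment of part (2), by contrast, coincides with the paper's and is sound, since there the product bound $\|\hat f-f\|_\infty\cdot O_P(n^{-1/2})$ legitimately applies to the correction term $R_n(\hat f-f,w_\tau)$.
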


\noindent The proof is postponed to Section 6.2\ref{secgrossproof}. With the estimated rescaled process $\hat{Y}$ at hand, we can now propose
$M-$estimation procedures to estimate the regression function in both the parametric and semiparametric cases.\\

\noindent {\bf 3.2 Asymptotic normality of $\hat{\theta}$ in the parametric case} \label{secsecparam}

Let $\Longrightarrow$ denote the weak convergence.
 
\begin{theorem} \label{theorempara}
Assume that (\ref{modelpara}) holds. Under Assumptions \ref{sym} to \ref{ainversible1}, the estimator in (\ref{estimpara}) admits the following asymptotic representation
\begin{align*}
\hat{\theta}(w)-\theta_0 & = \Sigma_{w,p}^{-1}\left\{\frac{1}{n}\sum_{i=1}^n \left(\int_0^{\tau_H} [Y_i(t)-\mu_0(t,Z_i;\theta_0)]\nabla_{\theta}\mu_0(t,Z_i;\theta_0)dw(t)\right.\right.\\
&\quad+ \left.\left.\int_0^{\tau_H}\!\!\! \int_0^t \eta_{s-}(T_i,\delta_i)E[\nabla_{\theta}\mu_0(t,Z;\theta_0)d\mu_0(s,Z;\theta_0)]dw(t)\right)\right\}+R_n(w),
\end{align*}
where $\sup_{w\in \mathcal{W}}\|R_n(w)\|=o_P(n^{-1/2}).$
As a consequence, for any $w\in \mathcal{W},$
\[\sqrt{n}\big(\hat{\theta}(w)-\theta_0\big)\Longrightarrow \mathcal{N}(0,V_{w,p}),\]
where $V_{w,p}=\Sigma_{w,p}^{-1}\Delta_{w,p} \Sigma_{w,p}^{-1}$ with
\begin{align*}
\Sigma_{w,p} & = \nabla^2_{\theta}M_w(\theta_0,\mu_0),\\
\Delta_{w,p} & = E\left[\mathcal I_w(T,\delta,\nabla_{\theta}\mu_0(\cdot,\cdot;\theta_0))\mathcal I_w(T,\delta,\nabla_{\theta}\mu_0(\cdot,\cdot;\theta_0))'\right].
\end{align*} 
\end{theorem}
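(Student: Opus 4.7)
My plan is a standard M-estimation argument carried out uniformly in the measure $w\in\mathcal{W}$: establish consistency, write the first-order conditions, Taylor-expand around $\theta_0$, and use Lemma \ref{lemmarescale} to convert the score term based on $\hat Y$ into an i.i.d.\ sum plus a correction term involving $\eta_{s-}$, up to a remainder of order $o_P(n^{-1/2})$ uniform in $w$. The lemma is exactly the device that lets the whole calculation go through despite the censoring-induced nuisance $\hat G$.

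\textbf{Step 1 (consistency).} I would first show that $\sup_{\theta\in\Theta,w\in\mathcal{W}}|M_{n,w}(\theta,\mu_0)-M_w(\theta,\mu_0)|=o_P(1)$. The quadratic part $n^{-1}\sum_i\int\mu_0(t,Z_i;\theta)^2\,dw(t)$ converges uniformly by the Glivenko--Cantelli property (Property \ref{aclasse}, which holds by Assumption \ref{ainversible1}). The linear part involving $\hat Y_i$ is handled by Lemma \ref{lemmarescale}(1) applied to the class $\{\nabla_\theta\mu_0(\cdot,\cdot;\theta)\}$ (the extra integrand $\mu_0(t,Z_i;\theta)$ can be absorbed in $f$), which replaces $\hat Y_i$ by $Y_i$ up to $o_P(n^{-1/2})$, after which a second appeal to Property \ref{aclasse} produces uniform convergence to the expectation. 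Combined with identifiability of $\theta_0$ (the limit $M_w$ is minimized uniquely at $\theta_0$, using invertibility of $\Sigma_{w,p}$), this yields $\hat\theta(w)\to\theta_0$ uniformly in $w$.

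\textbf{Step 2 (Taylor expansion).} Since $\hat\theta(w)$ is an interior minimizer, $\nabla_\theta M_{n,w}(\hat\theta(w))=0$. Taylor expanding gives
\[
0=\nabla_\theta M_{n,w}(\theta_0)+\bigl[\nabla^2_\theta M_{n,w}(\tilde\theta_n)\bigr]\bigl(\hat\theta(w)-\theta_0\bigr)
\]
for some $\tilde\theta_n$ between $\hat\theta(w)$ and $\theta_0$. For the Hessian, repeating the Glivenko--Cantelli/Lemma \ref{lemmarescale}(1) argument on the classes associated with $\nabla_\theta\mu_0\nabla_\theta\mu_0'$ and $\nabla^2_\theta\mu_0$ (Assumption \ref{ainversible1}), and using Property \ref{lipsch} to handle the replacement $\tilde\theta_n\leadsto\theta_0$ via consistency, one obtains $\nabla^2_\theta M_{n,w}(\tilde\theta_n)=2\Sigma_{w,p}+o_P(1)$ uniformly in $w$. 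Invertibility of $\Sigma_{w,p}$ lets us solve for $\hat\theta(w)-\theta_0$.

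\textbf{Step 3 (linearization of the score).} At $\theta_0$,
\[
\tfrac{1}{2}\nabla_\theta M_{n,w}(\theta_0)=-\frac{1}{n}\sum_{i=1}^n\int_0^{T_{(n)}}\!\!\![\hat Y_i(t)-\mu_0(t,Z_i;\theta_0)]\nabla_\theta\mu_0(t,Z_i;\theta_0)\,dw(t).
\]
Applying Lemma \ref{lemmarescale}(1) to $f(t,z)=\nabla_\theta\mu_0(t,z;\theta_0)$ (whose class satisfies Property \ref{avc} by Assumption \ref{ainversible1}; Assumption \ref{aw} and the H\"older condition \ref{holder} supply the remaining hypotheses), I can replace $\hat Y_i$ by $Y_i$ at the cost of the $\eta_{s-}$--term with $d\mu(s|Z)=d\mu_0(s,Z;\theta_0)$. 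Since $E[Y_i(t)\mid Z_i]=\mu_0(t,Z_i;\theta_0)$ under Model (\ref{modelpara}) and Assumptions \ref{sym}--\ref{symrec}, this produces a centered i.i.d.\ sum. The difference between the integrals over $[0,T_{(n)}]$ and $[0,\tau_H]$ is negligible under Assumption \ref{aw} since $w\bigl((T_{(n)},\tau_H]\bigr)\le c_0 W_0(T_{(n)})=o_P(1)$.

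\textbf{Step 4 (assembly and CLT).} Combining Steps 2 and 3 gives the stated asymptotic representation with $\sup_{w\in\mathcal{W}}|R_n(w)|=o_P(n^{-1/2})$. For each fixed $w$, the summands are i.i.d., centered, and square-integrable (Assumption \ref{aw} ensures the $\eta_{s-}$-term has a square-integrable envelope), so the ordinary CLT yields the asymptotic normality.

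\textbf{Main obstacle.} The delicate part is maintaining uniformity in $w\in\mathcal{W}$ throughout. The dependence of $\tilde\theta_n$ on both $n$ and $w$ in the Hessian step must be handled via Property \ref{lipsch}, and the uniform $o_P(n^{-1/2})$ control of all remainders (both those arising from $T_{(n)}$ versus $\tau_H$ and those from Lemma \ref{lemmarescale}) is what Assumption \ref{aw} is designed for. Checking that the integrated $\eta_{s-}$ correction genuinely has finite variance under Assumption \ref{aw}(1) is also essential.
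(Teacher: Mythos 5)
Your proposal follows essentially the same route as the paper's own proof: uniform consistency from Glivenko--Cantelli and Lemma \ref{lemmarescale}, a Taylor expansion of the score around $\theta_0$ with the Hessian converging uniformly to $2\Sigma_{w,p}$, and Lemma \ref{lemmarescale}(1) applied to $\nabla_\theta\mu_0(\cdot,\cdot;\theta_0)$ to produce the $\eta_{s-}$ correction, with the $[0,T_{(n)}]$ versus $[0,\tau_H]$ discrepancy controlled via Assumption \ref{aw}. The argument is correct and matches the paper step for step (the only cosmetic slip is invoking the class $\{\nabla_\theta\mu_0\}$ where the class $\{\mu_0(\cdot,\cdot;\theta)\}$ is the relevant one in the consistency step).
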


\begin{proof}
Write
\begin{equation}\label{ralebol}
M_{n,w}(\theta,\mu_0)=-2\hat{S}_n(\mu_0(\cdot,\cdot;\theta),w)+n^{-1}\sum_{i=1}^n \int_0^{T_{(n)}} \mu_{0}(t,Z_i;\theta)^2dw(t).
\end{equation}
Then, using the asymptotic representation of Lemma \ref{lemmarescale}, one gets
\begin{align*}
M_{n,w}(\theta,\mu_0) &=  -\frac{2}{n}\sum_{i=1}^n \int_{0}^{T_{(n)}}Y_i(t)\mu_0(t,Z_i;\theta)dw(t)\\ 
&  +\frac{1}{n}\sum_{i=1}^n \int_0^{T_{(n)}} \mu_{0}(t,Z_i;\theta)^2dw(t) \\
 & -\frac{2}{n}\sum_{i=1}^n \int_0^{\tau_H} \int_0^t \eta_{s-}(T_i,\delta_i)E[\mu_0(t,Z;\theta)d\mu(s|Z)]dw(t)+R_n(\mu_0(\cdot,\cdot;\theta),w) \\
 &= \mathcal{T}_1(\theta,w)+\mathcal{T}_2(\theta,w)+\mathcal{T}_3(\theta,w)+R_n(\mu_0(\cdot,\cdot;\theta),w),
\end{align*}
where $\sup_{\theta,w} |R_n(\mu_0(\cdot,\cdot,\theta),w)|=o_P(1),$ since Assumption \ref{ainversible1} ensures that \\$\{\mu_0(\cdot,\cdot;\theta),\theta \in \Theta\}$ satisfies the conditions of Lemma \ref{lemmarescale}. The sum of the first two terms $\mathcal{T}_1(\theta,w)+\mathcal{T}_2(\theta,w)$ converges towards
$M_{w}(\theta,\mu_0)$ from the law of large numbers. Moreover, Assumption \ref{ainversible1} ensures that this convergence is uniform with respect to $\theta$ and $w$. On the other hand, the expectation of $\mathcal{T}_3(\theta,w)$ is 0, and the convergence of $\mathcal{T}_3(\theta,w)$ is uniform with respect to $\theta$ and $w$ since it is of the form $n^{-1}\sum_{i=1}^n \int_0^{\tau_H} \psi(\theta,t,T_i,\delta_i,Z_i)dw(t),$
with $\psi$ satisfying a Lipschitz property with respect to $\theta$ (this is a consequence of Properties \ref{aclasse} and \ref{lipsch} for $\{\mu_0(\cdot,\cdot;\theta),\theta \in \Theta\}$). Finally, this ensures $\sup_{\theta,w}|M_{n,w}(\theta,\mu_0)-M_w(\theta,\mu_0)|=o_P(1)$ and consequently $\sup_w\|\hat{\theta}(w)-\theta_0\|=o_P(1).$

To obtain the uniform CLT property for $\hat{\theta}(w)$, use a Taylor expansion of $\nabla_{\theta}M_{n,w}(\theta,\mu_0)$ around $\theta_0$:
\begin{equation}
\label{taylor1}
\nabla_{\theta}M_{n,w}(\hat{\theta},\mu_0)=\nabla_{\theta}M_{n,w}(\theta_0,\mu_0)+\nabla^2_{\theta}M_{n,w}(\tilde{\theta},\mu_0)(\hat{\theta}-\theta_0),
\end{equation}
for some $\tilde{\theta}$ between $\hat{\theta}$ and $\theta_0.$ The left-hand side of (\ref{taylor1}) is zero by definition
of $\hat{\theta}.$ Moreover, the matrix $\nabla^2_{\theta}M_{n,w}(\tilde{\theta},\mu_0)$ is almost surely invertible for $n$
large enough under Assumption \ref{ainversible1} since $\hat{\theta}$ (and consequently $\tilde{\theta}$) tends to $\theta_0$ almost surely. This leads to
\[\hat{\theta}-\theta_0=-\nabla^2_{\theta}M_{n,w}^{-1}(\tilde{\theta},\mu_0)\nabla_{\theta}M_{n,w}(\theta_0,\mu_0).\]
Write
\begin{align*}
\nabla^2_{\theta}M_{n,w}(\tilde{\theta},\mu_0)=
& -2\bigg[\hat{S}_n(\nabla^2_{\theta}\mu_0(\cdot,\cdot;\tilde{\theta}),w)-\frac{1}{n}\sum_{i=1}^n \int_0^{\tau_H}\left( \nabla_{\theta}\mu_0(t,Z_i;\tilde{\theta})\nabla_{\theta}\mu_0(t,Z_i;\tilde{\theta})' \right. \\
& \qquad\qquad \left. +\mu_0(t,Z_i;\tilde{\theta})\nabla_{\theta}^2 \mu_0(t,Z_i;\tilde{\theta})\right)dw(t)\bigg]+R_n(\theta,w),
\end{align*}
where $R_n(\theta,w)$ comes from the change in the integration bounds of $[0,T_{(n)}]$ by $[0,\tau_H]$ and can be seen to tend uniformly to zero from Lebesgue's dominated convergence since the term inside the integral is bounded.
From Lemma \ref{lemmarescale}, the almost sure convergence of $\tilde{\theta}$ and the fact that 
$\{\nabla^2_{\theta}\mu_0(\cdot,\cdot,\theta), \theta \in \Theta\}$ satisfies Property \ref{lipsch} (see Assumption \ref{ainversible1}), we get that $\hat{S}_n(\nabla^2_{\theta}\mu_0(\cdot,\cdot;\tilde{\theta}),w)$ converges to $\int_0^{\tau_H} E[Y(t)\nabla_{\theta}^2\mu_0(t,Z;\theta_0)]dw(t)$ uniformly in $w$. The second part converges uniformly to its expectation over $\Theta$
as a consequence of the Glivenko-Cantelli property of classes of functions satisfying Property \ref{lipsch}.
This shows that 
\[\sup_{w}\|\nabla^2_{\theta}M_{n,w}^{-1}(\tilde{\theta},\mu_0)-\nabla_{\theta}^2 M^{-1}_w(\theta_0,\mu_0)\|=o_P(1).\]
On the other hand, we write
\begin{align*}
\nabla_{\theta}M_{n,w}(\theta_0,\mu_0) & =-2\left[\hat{S}_n(\nabla_{\theta}\mu_0(\cdot,\cdot;\theta_0),w)-\frac{1}{n}\sum_{i=1}^n\int_0^{\tau_H} \mu_0(t,Z;\theta_0)\nabla_{\theta}\mu_0(t,Z;\theta_0)dw(t)\right]\\
& \quad +\frac{2}{n}\sum_{i=1}^n\int_{T_{(n)}}^{\tau_H} \mu_0(t,Z;\theta_0)\nabla_{\theta}\mu_0(t,Z;\theta_0)dw(t).
\end{align*}
Using Lebesgue's dominated convergence theorem, the last term tends uniformly to zero at a $n^{-1/2}$ rate. Finally, the asymptotic representation follows from Lemma \ref{lemmarescale}.
\end{proof}

\noindent {\bf 3.3 Asymptotic normality of $\hat{\theta}$ in the semiparametric case} \label{secsecsemiparam}

\begin{theorem} \label{theoremsemipara}
Assume that (\ref{modelsemipara}) holds. Under Assumptions \ref{sym} to \ref{holder} and \ref{ainversible2} to \ref{aregularite}, the estimator in (\ref{estimsemipara}) admits the following asymptotic representation
\begin{align*}
\hat{\theta}(w)-\theta_0 & = \Sigma_{w,sp}^{-1}\left\{\frac{1}{n}\sum_{i=1}^n \left(\int_0^{\tau_H} [Y_i(t)-\mu_{\theta_0}(t,\theta_0'Z_i)]\nabla_{\theta}\mu_{\theta_0}(t,Z_i)dw(t)\right.\right.\\
&\quad+ \left.\left.\int_0^{\tau_H}\!\!\! \int_0^t \eta_{s-}(T_i,\delta_i)E[\nabla_{\theta}\mu_{\theta_0}(t,Z)d\mu_{\theta_0}(s,\theta_0'Z)]dw(t)\right)\right\}+R_n(w),
\end{align*}
where $\sup_{w\in \mathcal{W}}\|R_n(w)\|=o_P(n^{-1/2}).$
As a consequence, for any $w\in \mathcal{W},$
\[\sqrt{n}(\hat{\theta}(w)-\theta_0)\Longrightarrow \mathcal{N}(0,V_{w,sp}),\]
where $V_{w,sp}=\Sigma_{w,sp}^{-1}\Delta_{w,sp} \Sigma_{w,sp}^{-1}$ with
\begin{align*}
\Sigma_{w,p} & = \nabla^2_{\theta}M_w(\theta_0,\mu_{\theta_0}),\\
\Delta_{w,p} & = E\left[\mathcal I_w(T,\delta,\nabla_{\theta}\mu_{\theta_0}(\cdot,\cdot;\theta_0))\mathcal I_w(T,\delta,\nabla_{\theta}\mu_{\theta_0}(\cdot,\cdot;\theta_0))'\right].
\end{align*}

\end{theorem}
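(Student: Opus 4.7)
The plan is to mirror the proof of Theorem \ref{theorempara}, but to handle carefully the additional randomness coming from the nonparametric estimators $\hat{\mu}_{\theta}$ using empirical process theory. First I would establish uniform consistency of $\hat{\theta}(w)$ on $\mathcal{W}$ by showing that $M_{n,w}(\theta, \hat{\mu}_{\theta})$ converges uniformly in $(\theta, w)$ to $M_w(\theta, \mu_{\theta})$. This uses part (2) of Lemma \ref{lemmarescale} (applied to the family $\hat{\mu}_{\theta}$, whose uniform approximation rate is given by Assumption \ref{aunifconv}(1)), together with the Glivenko--Cantelli property coming from Property \ref{aclasse} and the Lipschitz continuity in $\theta$ (Property \ref{lipsch}) stated in Assumption \ref{ainversible2}.

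Next, I would perform a Taylor expansion of the score equation $\nabla_{\theta} M_{n,w}(\hat{\theta}, \hat{\mu}_{\hat{\theta}}) = 0$ around $\theta_0$, yielding
\[
\hat{\theta}(w) - \theta_0 = -\bigl[\nabla^2_{\theta} M_{n,w}(\tilde{\theta}, \hat{\mu}_{\tilde{\theta}})\bigr]^{-1} \nabla_{\theta} M_{n,w}(\theta_0, \hat{\mu}_{\theta_0})
\]
for some $\tilde{\theta}$ between $\hat{\theta}$ and $\theta_0$. The Hessian piece is handled exactly as in the parametric case: expanding $\nabla^2_{\theta} M_{n,w}(\tilde{\theta}, \hat{\mu}_{\tilde{\theta}})$ and using the uniform convergence of $\hat{\mu}_{\theta}$, $\nabla_{\theta}\hat{\mu}_{\theta}$, $\nabla^2_{\theta}\hat{\mu}_{\theta}$ from Assumption \ref{aunifconv}(1), combined with Lemma \ref{lemmarescale}(2) on the terms involving $\hat{Y}$ and the Glivenko--Cantelli property of the relevant classes, shows that this matrix converges uniformly in probability to $-2\Sigma_{w,sp}$.

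The main obstacle is the gradient evaluated at $\theta_0$. Writing
\[
\nabla_{\theta} M_{n,w}(\theta_0, \hat{\mu}_{\theta_0}) = \frac{2}{n}\sum_{i=1}^n \int_0^{T_{(n)}} \bigl[\hat{\mu}_{\theta_0}(t,\theta_0'Z_i) - \hat{Y}_i(t)\bigr]\nabla_{\theta}\hat{\mu}_{\theta_0}(t,Z_i)\, dw(t),
\]
I would split this into the target expression (with $\mu_{\theta_0}$ and $\nabla_{\theta}\mu_{\theta_0}$ replacing their estimators) plus three remainder terms. The ``cross'' remainder of the form $\int(\hat{\mu}_{\theta_0} - \mu_{\theta_0})(\nabla_{\theta}\hat{\mu}_{\theta_0} - \nabla_{\theta}\mu_{\theta_0})\,dw$ is of order $\varepsilon_n \varepsilon_n' = o_P(n^{-1/2})$ by Assumption \ref{aunifconv}(2) (with the integral finite thanks to the bound on $\int \mu_{\theta_0}^{2(\lambda_1+\lambda_2)} dw$). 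The two ``linear'' remainders require Donsker-class arguments: by Assumption \ref{aregularite} both the limiting integrands and their $\hat{\mu}$-perturbed analogues lie in fixed Donsker classes $\mathcal{G}$ and $\mathcal{H}$, so the empirical process indexed by these classes is asymptotically equicontinuous and the uniform convergence of $\hat{\mu}_{\theta_0}$ (Assumption \ref{aunifconv}(2)) forces the remainders to be $o_P(n^{-1/2})$. The leading $\sqrt{n}$-term is then precisely of the form handled by part (1) of Lemma \ref{lemmarescale}, which produces the i.i.d.\ representation with the $\eta_{s-}(T_i,\delta_i)$ correction.

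Combining the Hessian convergence to $-2\Sigma_{w,sp}$ with the i.i.d.\ representation of the gradient yields the stated asymptotic expansion uniformly in $w \in \mathcal{W}$, and the multivariate CLT applied pointwise in $w$ gives the Gaussian limit. The most delicate step will be the empirical process control of the gradient: verifying that the substitution $\hat{\mu}_{\theta_0} \leadsto \mu_{\theta_0}$ inside the two-dimensional integral against $\hat{Y}$ is negligible at the $n^{-1/2}$ scale, for which the conjunction of the Donsker assumption \ref{aregularite}, the rate condition $\varepsilon_n \varepsilon_n' = o_P(n^{-1/2})$, and Lemma \ref{lemmarescale}(2) is essential.
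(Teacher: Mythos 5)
Your overall architecture matches the paper's proof: consistency via Lemma \ref{lemmarescale}(2) and the Glivenko--Cantelli properties, a Taylor expansion of the score, uniform convergence of the Hessian to $-2\Sigma_{w,sp}$, and a decomposition of $\nabla_{\theta}M_{n,w}(\theta_0,\hat{\mu}_{\theta_0})$ into the target gradient plus a cross term (killed by $\varepsilon_n\varepsilon_n'=o_P(n^{-1/2})$), two linear remainders, and a boundary/rescaling remainder. However, there is a genuine gap in your treatment of the two linear remainders. Asymptotic equicontinuity of the Donsker classes $\mathcal{G}$ and $\mathcal{H}$ only controls the \emph{centered} empirical process: it gives
\[
\sup_w\Big\|R_{kn}(w)-\int\big(f_w(z,y)-f_{n,w}(z,y)\big)\,dP_{Z,Y}(z,y)\Big\|=o_P(n^{-1/2}),\qquad k=1,2,
\]
and nothing more. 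To conclude $R_{kn}(w)=o_P(n^{-1/2})$ you still must show that the mean $\int(f_w-f_{n,w})\,dP_{Z,Y}$ vanishes (or is $o(n^{-1/2})$). This cannot follow from Assumption \ref{aunifconv}(2) alone, since that assumption only constrains the \emph{product} $\varepsilon_n\varepsilon_n'$; each individual rate may be as slow as $n^{-1/4}$, so a naive bound on the mean term by $\varepsilon_n$ or $\varepsilon_n'$ is not negligible at the $n^{-1/2}$ scale.

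The missing idea is the pair of orthogonality identities that the paper invokes at exactly this point. For the remainder involving $(\mu_{\theta_0}-Y)(\nabla_{\theta}\mu_{\theta_0}-\nabla_{\theta}\hat{\mu}_{\theta_0})$, the mean vanishes because $\nabla_{\theta}\mu_{\theta_0}(t,z)-\nabla_{\theta}\hat{\mu}_{\theta_0}(t,z)$ is a function of $z$ only and $E[\mu_{\theta_0}(t,\theta_0'Z)-Y(t)\,|\,Z]=0$. For the remainder involving $(\hat{\mu}_{\theta_0}-\mu_{\theta_0})\nabla_{\theta}\mu_{\theta_0}$, the mean vanishes because $\hat{\mu}_{\theta_0}(t,\theta_0'z)-\mu_{\theta_0}(t,\theta_0'z)$ depends on $z$ only through $\theta_0'z$, while Lemma \ref{lemmacalculgradient} gives $E[\nabla_{\theta}\mu_{\theta_0}(t,Z)\,|\,\theta_0'Z]=0$. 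This second identity is the structural reason the single-index gradient is orthogonal to the nonparametric estimation error, and it is why Lemma \ref{lemmacalculgradient} appears in the appendix. Without these two centering steps your argument does not close; with them added, your proof coincides with the paper's.
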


\begin{proof}
 The consistency of the preliminary estimator can be proved in the same way as in the proof of Theorem \ref{theorempara}. Following the decomposition (\ref{ralebol}), one can show that
$M_{n,w}(\theta, \hat{\mu}_{\theta})=M_{n,w}(\mu_{\theta})+R_n(\theta,w),$
where $\sup_{\theta,w}|R_n(\theta,w)|=o_P(1).$
Indeed, the second part of Lemma \ref{lemmarescale} allows us to replace $\hat{S}_n(\hat{\mu}_{\theta},w)$ by $S_n(\hat{\mu}_{\theta},w)$ up to some uniformly negligible remainder term.
Next, observe that
\begin{align*}\frac{1}{n}\sum_{i=1}^n \int_0^{T_{(n)}} \hat{\mu}_{\theta}(t,\theta'Z_i)^2dw(t)& =\frac{1}{n}\sum_{i=1}^n \int_0^{T_{(n)}} \mu_{\theta}(t,\theta'Z_i)^2dw(t)\\
&\quad +\frac{1}{n}\sum_{i=1}^n \int_0^{T_{(n)}}  [\hat{\mu}_{\theta}(t,\theta'Z_i)^2-\mu_{\theta}(t,\theta'Z_i)^2]dw(t),
\end{align*}
where the second term goes to zero uniformly in $\theta$ and $w$ thanks to the uniform convergence assumptions on $\hat{\mu}_{\theta}.$ This shows that $\sup_{\theta,w}|M_{n,w}(\theta, \hat{\mu}_{\theta})-M_{n,w}(\theta,\mu_{\theta})|=o_P(1)$ and the uniform convergence of $M_{n,w}(\theta,\mu_{\theta})$ is obtained following the path of the proof of Theorem \ref{theorempara}.

Asymptotic normality comes from the fact that
\[\hat{\theta}-\theta_0=-\nabla^2_{\theta}M_{n,w}^{-1}(\tilde{\theta},\hat{\mu}_{\tilde{\theta}})\nabla_{\theta}M_{n,w}(\theta_0,\hat{\mu}_{\theta_0}).\] 
The fact that
\[\sup_{w}\|\nabla^2_{\theta}M_{n,w}^{-1}(\tilde{\theta},\hat{\mu}_{\tilde{\theta}})-\nabla_{\theta}^2 M^{-1}_w(\theta_0,\mu_{\theta_0})\|=o_P(1)\]
can be shown in the same way as in the proof of Theorem \ref{theorempara} using now the second part of Lemma \ref{lemmarescale}. The big issue consists of proving the asymptotic representation of $\nabla_{\theta}M_{n,w}(\theta_0,\hat{\mu}_{\theta_0})$. Write
\[\nabla_{\theta}M_{n,w}(\theta_0,\hat{\mu}_{\theta_0})=
-2\left[\hat{S}_n(\nabla_{\theta}\hat{\mu}_{\theta_0}(\cdot,\theta_0'\cdot),w)-\frac{1}{n}\sum_{i=1}^n \int_0^{T_{(n)}} \hat{\mu}_{\theta_0}(t,\theta_0'Z_i)\nabla_{\theta}\hat{\mu}_{\theta_0}(t,Z_i) dw(t)\right].\]
Using the second part of Lemma \ref{lemmarescale}, this can be rewritten as
\begin{align*}
& \nabla_{\theta}M_{n,w}(\theta_0,\hat{\mu}_{\theta_0})\\
&\quad = \nabla_{\theta}M_{n,w}(\theta_0,\mu_{\theta_0})\\
&\qquad-\frac{2}{n}\sum_{i=1}^n \int_0^{\tau_H}\bar{\mu}_{\theta_0}(t,\theta_0'Z_i)^{\lambda_1+\lambda_2} \big(\mu_{\theta_0}(t,\theta_0'Z_i)-Y_i(t)\big)\frac{\nabla_{\theta}\mu_{\theta_0}(t,Z_i)-\nabla_{\theta}\hat{\mu}_{\theta_0}(t,Z_i)}{\bar{\mu}_{\theta_0}(t,\theta_0'Z_i)^{\lambda_1+\lambda_2}}dw(t) \\
& \qquad+\frac{2}{n}\sum_{i=1}^n \int_0^{\tau_H}\frac{\hat{\mu}_{\theta_0}(t,\theta_0'Z_i)-\mu_{\theta_0}(t,\theta_0'Z_i)}{\bar{\mu}_{\theta_0}(t,\theta_0'Z_i)^{\lambda_1+\lambda_2}}\bar{\mu}_{\theta_0}(t,\theta_0'Z_i)^{\lambda_1+\lambda_2}\nabla_{\theta}\mu_{\theta_0}(t,Z_i)dw(t) \\
& \qquad+\frac{2}{n}\sum_{i=1}^n \int_0^{\tau_H} \frac{\big(\hat{\mu}_{\theta_0}(t,\theta_0'Z_i)-\mu_{\theta_0}(t,\theta_0'Z_i)\big)\big(\nabla_{\theta}\hat{\mu}_{\theta_0}(t,Z_i)-\nabla_{\theta}\mu_{\theta_0}(t,Z_i)\big)}
{\bar{\mu}_{\theta_0}(t,\theta_0'Z_i)^{2(\lambda_1+\lambda_2)}\bar{\mu}_{\theta_0}(t,\theta_0'Z_i)^{-2(\lambda_1+\lambda_2)}}dw(t) \\
&\qquad +R_{4n}(w)\\
&\quad= \nabla_{\theta}M_{n,w}(\theta_0,\mu_{\theta_0}) + R_{1n}(w)+R_{2n}(w)+R_{3n}(w)+R_{4n}(w),
\end{align*}
where $R_{4n}(w)$ comes from Lemma \ref{lemmarescale} and the change in the integration bound of $[0,T_{(n)}]$ by $[0,\tau_H].$ Using the same arguments as in the proof of Theorem \ref{theorempara}, we deduce that 
$\sup_{w}\|R_{4n}(w)\|=o_P(n^{-1/2}).$
Using the uniform convergence rates of $\hat{\mu}_{\theta_0}$ and of $\nabla_{\theta}\hat{\mu}_{\theta_0}$, we get straightforwardly that 
$\sup_{w}\|R_{3n}(w)\|=o_P(n^{-1/2}).$
Using the uniform convergence of $\nabla_{\theta}\hat{\mu}_{\theta_0}$, we see that the term $R_{1n}$ can be decomposed into
\[R_{1n}(w)=n^{-1}\sum_{i=1}^n \big(f_w(Z_i,Y_i)-f_{n,w}(Z_i,Y_i)\big),\]
where $f_w$ and $f_{n,w}$ both belong (almost surely for $n$ large enough) to the class $\mathcal{G}$ defined in Assumption \ref{aregularite} and with $\sup_{w}\|f_w-f_{n,w}\|_{\infty}\rightarrow 0$ a.s. Therefore, using the asymptotic equicontinuity of the Donsker class $\mathcal{G}$ (see e.g. Section 2.1.2 in Van der Vaart and Wellner~\citeyearpar{Vaart96}), this shows that
\[\sup_w\|R_{1n}(w)-\int \big(f_w(z,y)-f_{n,w}(z,y)\big)dP_{Z,Y}(z,y)\|=o_P(n^{-1/2}).\]
Moreover, it is clear that $\int (f_w(z,y)-f_{n,w}(z,y))dP_{Z,Y}(z,y)=0$ using the fact that $\nabla_{\theta}\mu_{\theta_0}(t,z)-\nabla_{\theta}\hat{\mu}_{\theta_0}(t,z)$ is a function of $z$ only and that
$E[\mu_{\theta_0}(t,\theta_0'Z_i)-Y_i(t)|Z_i]=0$.

The term $R_{2n}(w)$ can be handled in the same way using now the Donsker class $\mathcal{H}$ in Assumption \ref{aregularite}, observing that 
$\hat{\mu}_{\theta_0}(t,\theta_0'z)-\mu_{\theta_0}(t,\theta_0'z)$ is a function of $\theta_0'z$ only and getting from Lemma 5 in Supplementary Material, that $E[\nabla_{\theta}\mu_{\theta_0}(t,Z)|\theta_0'Z]=0$.
\end{proof}

\noindent {\bf 3.4 Adaptive choice of $w$} \label{secsecadaptw}

The representations of Theorems \ref{theorempara} and \ref{theoremsemipara} hold uniformly in $w\in \mathcal{W}.$
Therefore, consider some data-driven measure $\hat{w}_n\in \mathcal{W},$ tending to some asymptotic measure $w_0$ in the sense that
$\int \phi(t)d\{\hat{w}_n-w_0\}(t)$ tends to $0$ in probability for all function $\phi$ in $L^1(d\hat{w}_n)\cap L^1(dw_0).$
Moreover, assume that
\begin{equation}
\label{ralebol2}\sup_{w\in \mathcal{W}:w\rightarrow w_0}\left|n^{-1}\sum_{i=1}^n \int \phi(t;T_i,\delta_i,Z_i,Y_i)d\{w-w_0\}(t)\right|=o_P(n^{-1/2}),
\end{equation}
for any function $\phi$ with $E[\phi(t;T_i,\delta_i,Z_i,Y_i)]=0.$
This assumption can easily be fulfilled by considering a simple class of measures, ensuring that $\{(T,Z,\delta,Y)\rightarrow n^{-1}\sum_{i=1}^n \int \phi(t;T,\delta,Z,Y)dw(t):w\in \mathcal{W}\}$ is a Donsker class of functions.
In this situation, one can deduce that (for example in the semiparametric case),
\begin{align}\label{gouttedeau}
\hat{\theta}(\hat{w})-\theta_0 & = \Sigma_{w,sp}^{-1}\left\{\frac{1}{n}\sum_{i=1}^n \left(\int_0^{\tau_H} [Y_i(t)-\mu_{\theta_0}(t,\theta_0'Z_i)]\nabla_{\theta}\mu_{\theta_0}(t,Z_i)dw_0(t)\right.\right. \nonumber\\
&\quad+ \left.\left.\int_0^{\tau_H}\!\!\! \int_0^t \eta_{s-}(T_i,\delta_i)E[\nabla_{\theta}\mu_{\theta_0}(t,Z)d\mu_{\theta_0}(s,\theta_0'Z)]dw_0(t)\right)\right\}+R'_n(\hat{w}), 
\end{align}
where $R'_n(\hat{w})=o_P(n^{-1/2}).$ 
This can be done by combining the asymptotic representation of $\hat{\theta}(\hat{w})-\hat\theta(w_0)$ and of $\hat{\theta}(w_0)-\theta_0,$ and using (\ref{ralebol2}) by studying the difference between the two main terms.

The idea is to consider an asymptotically optimal measure $w_0.$ For example, suppose that one is searching for an estimator that minimizes
$E[\|\hat{\theta}-\theta_0\|^2].$ Denote $V_{w,sp}(i)$ the $i-$th diagonal element of the asymptotic covariance matrix $V_{w,sp}.$
In view of our objective, one wishes to consider $w_0$ as the minimizer among measures $\mathcal{W}$ of $n^{-1}\sum_{i=1}^d V_{w,sp}(i).$ 
In practice, $\hat{w}$ can be taken as the minimizer over $\mathcal{W}$ of $n^{-1}\sum_{i=1}^d \hat{V}_{w,sp}(i),$ where $\hat{V}_{w,sp}$ denotes an estimated version of the asymptotic covariance matrix. 
The quantity $n^{-1}\sum_{i=1}^d \hat{V}_{w,sp}(i)$ can be seen as an estimator of the mean-squared error of $\hat{\theta}(w).$ 
Such an adaptive weight function will converge towards $w_0$ under mild conditions, provided that there is some continuity over $\mathcal{W}$ of the map $w\rightarrow V_{w,sp}.$ 
Then, representation (\ref{gouttedeau}) ensures that $\hat{\theta}(\hat{w})$ is asymptotically equivalent to the optimal estimator $\hat{\theta}(w_0).$\\

\noindent {\bf 3.5 Estimation of the variance} \label{secsecvariance}

We show how to estimate the variance in the representation of Theorem \ref{theoremsemipara} and we propose an estimator of the mean squared error of $\theta_0$. Denote by $\xi_{n,w}$ the term between brackets in the representation of Theorem \ref{theoremsemipara} so that
\[\hat{\theta}(w)-\theta_0 = \Sigma_{w,sp}^{-1}\,\xi_{n,w}+R_n(w),\]
where $\sup_{w\in \mathcal{W}}\|R_n(w)\|=o_P(n^{-1/2})$.
The quantity $\xi_{n,w}$ can be estimated in the following way 
\[\hat{\xi}_{n,w}=\frac{1}{n}\sum_{i=1}^n \hat{\psi}(\delta_i,Z_i,T_i,Y_i;w),\]
where
\begin{align*}
\hat{\psi}(\delta,Z,T,Y;w) & = \int_0^{T_{(n)}} \big(Y(t)-\hat{\mu}_{\hat{\theta}}(t,\hat{\theta}'Z)\big)\nabla_{\theta}
\hat{\mu}_{\hat{\theta}}(t,Z)dw(t)\\
&\quad+ \int_0^{T_{(n)}}\!\!\! \int_0^t \hat{\eta}_{s-}(T,\delta)n^{-1}\sum_{i=1}^n\,\big(\nabla_{\theta}\hat{\mu}_{\hat{\theta}}(t,Z_i)d\hat{\mu}_{\hat{\theta}}
(s,\hat{\theta}'Z_i)\big)dw(t),
\end{align*}
\[\hat{\eta}_t(T,\delta) =\frac{(1-\delta)I(T\leq t)}{1-\hat{H}(T-)}-\int_0^{t}\frac{I(T\geq s)d\hat{G}(s)}{\big(1-\hat{H}(s-)\big)\big(1-\hat{G}(s-)\big)}\]
and $\hat{H}$ is the empirical estimator of $H$.

Therefore, the quantity $\Delta_{w,sp}$ can be estimated by
\[\hat{\Delta}_{w,sp}=\frac{1}{n}\sum_{i=1}^n\left(\hat{\psi}(\delta,Z,T,Y;w) -\frac{1}{n}\sum_{i=1}^n \hat{\psi}(\delta,Z,T,Y;w)\right)^{\otimes 2},\]
where $\otimes 2$ denotes the product of the matrix with its transpose. 
To consistently estimate $\Sigma_{w,sp}$, we use
\[\hat{\Sigma}_{w,sp}=\frac{1}{n}\sum_{i=1}^n\int_0^{T_{(n)}}\nabla_{\theta} \hat{\mu}_{\hat{\theta}}(t,Z_i)\nabla_{\theta}\hat{\mu}_{\hat{\theta}}(t,Z_i)' dw(t).\]
A consistent estimator of $V_{w,sp}$ can then be computed from $\hat{V}_{w,sp}=\hat{\Sigma}_{w,sp}^{-1}\,\hat{\Delta}_{w,sp}\,\hat{\Sigma}_{w,sp}^{-1}$. 
The consistency of $\hat{V}_{w,sp}$ comes from the uniform consistency of the estimator $\hat{H}$ and $\hat{G}$ (and of Kaplan-Meier integrals with respect to $\hat{G}$), from the consistency of $\hat{\theta},$ and from the uniform consistency of the nonparametric estimators of $\hat{\mu}_{\theta}$ and of its partial derivatives.\\

\noindent {\bf 3.6 Estimation of the nonparametric part} \label{secsecresnonpara}

In the semiparametric model, estimation of the finite dimensional parameter
$\theta_0$ is only the first step of the method. With our estimator $\hat{\theta}$ at hand, we wish to
estimate the conditional mean function $\mu(t|z).$ Different strategies can be proposed to perform
this estimation. For this final estimator, there is no theoretical need to use the same kind of nonparametric estimator as
in the computation of $\hat{\theta}.$ Proposition \ref{propnonpara} below states that, under some convergence
assumptions for the nonparametric estimator used in this second step, the asymptotic behavior of the final semiparametric estimator of $\mu$ is identical to the asymptotic behavior of a purely nonparametric estimator in the case where $\theta_0$
is exactly known.

\begin{proposition} \label{propnonpara}
Let $\Theta^*$ be some neighborhood of $\theta_0,$ and let $\mathcal{T}$ be a set on which \linebreak $\sup_{\theta\in \Theta^*,t\in \mathcal{T},z\in \mathcal{Z}}\|\nabla_{\theta}\mu_{\theta_0}(t,z)\|<\infty.$
Let $\hat{\mu}_{\theta}$ be a family of nonparametric estimators of $\mu_{\theta}$ satisfying the assumption
\begin{equation}\label{easy}\sup_{\theta\in \Theta^*,t\in \mathcal{T},z\in \mathcal{Z}}\|\nabla_{\theta}\hat{\mu}_{\theta}(t,z)-\nabla_{\theta}\mu_{\theta}(t,z)\|=o_P(1).
\end{equation}
Then, we have
\[\sup_{t \in \mathcal{T},z\in \mathcal{Z}}|\hat{\mu}_{\hat{\theta}}(t,\hat{\theta}'z)-\hat{\mu}_{\theta_0}(t,\theta_0'z)|=O_P(n^{-1/2}).\]
\end{proposition}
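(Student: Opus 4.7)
My plan is a one-line mean-value argument, with the work hidden in verifying a uniform bound on $\nabla_\theta \hat\mu_\theta$.

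\textbf{Step 1: Reduce via mean value theorem.} For each fixed $(t,z)\in\mathcal{T}\times\mathcal{Z}$, the map $\theta\mapsto \hat\mu_\theta(t,\theta'z)$ is real-valued and (by the assumption that its gradient $\nabla_{\theta}\hat\mu_\theta(t,z)$ exists) differentiable, so the mean value theorem gives
\[
\hat\mu_{\hat\theta}(t,\hat\theta'z)-\hat\mu_{\theta_0}(t,\theta_0'z)
=\nabla_{\theta}\hat\mu_{\tilde\theta_{t,z}}(t,z)'\bigl(\hat\theta-\theta_0\bigr),
\]
for some $\tilde\theta_{t,z}$ on the segment $[\theta_0,\hat\theta]$. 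By Cauchy--Schwarz the claim reduces to
\[
\sup_{t\in\mathcal T,\,z\in\mathcal Z}\bigl\|\nabla_{\theta}\hat\mu_{\tilde\theta_{t,z}}(t,z)\bigr\|=O_P(1)
\qquad\text{and}\qquad \|\hat\theta-\theta_0\|=O_P(n^{-1/2}).
\]
The second equality is exactly the conclusion of Theorem \ref{theoremsemipara}.

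\textbf{Step 2: Locate $\tilde\theta$ in $\Theta^*$.} Shrinking $\Theta^*$ if necessary, we may assume it is a convex (e.g.\ ball) neighborhood of $\theta_0$. Since Theorem \ref{theoremsemipara} yields $\hat\theta\xrightarrow{P}\theta_0$, we have $P(\hat\theta\in\Theta^*)\to 1$; on this event the whole segment, and in particular every $\tilde\theta_{t,z}$, lies in $\Theta^*$. All subsequent estimates are carried out on this event of probability tending to one, which preserves $O_P$ statements.

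\textbf{Step 3: Uniform boundedness of $\nabla_{\theta}\hat\mu_{\tilde\theta}$.} Write
\[
\bigl\|\nabla_{\theta}\hat\mu_{\tilde\theta}(t,z)\bigr\|
\leq \bigl\|\nabla_{\theta}\hat\mu_{\tilde\theta}(t,z)-\nabla_{\theta}\mu_{\tilde\theta}(t,z)\bigr\|
+\bigl\|\nabla_{\theta}\mu_{\tilde\theta}(t,z)\bigr\|.
\]
The first term is $o_P(1)$ uniformly in $(\theta,t,z)\in\Theta^*\times\mathcal T\times\mathcal Z$ by hypothesis (\ref{easy}), and the second is bounded uniformly by the hypothesis of the proposition (read, as is natural, as a uniform bound of $\|\nabla_{\theta}\mu_\theta(t,z)\|$ over $\Theta^*\times\mathcal T\times\mathcal Z$). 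Hence the supremum over $(t,z)$ of $\|\nabla_{\theta}\hat\mu_{\tilde\theta_{t,z}}(t,z)\|$ is $O_P(1)$.

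Combining Steps 1--3 gives $\sup_{t,z}|\hat\mu_{\hat\theta}(t,\hat\theta'z)-\hat\mu_{\theta_0}(t,\theta_0'z)|=O_P(1)\cdot O_P(n^{-1/2})=O_P(n^{-1/2})$. The only non-cosmetic step is Step 3; everything else is bookkeeping. I expect no substantial obstacle: the proposition is essentially a ``plug-in'' statement that transfers the parametric rate of $\hat\theta$ into an $n^{-1/2}$ bound for the extra error introduced by plugging $\hat\theta$ in place of $\theta_0$ inside the nonparametric estimator, and the only content is the uniform gradient bound provided directly by (\ref{easy}) and the standing boundedness hypothesis.
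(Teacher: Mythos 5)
Your proof is correct and follows essentially the same route as the paper's own (very terse) argument: a Taylor/mean-value expansion of $\hat{\mu}_{\hat{\theta}}$ around $\theta_0$, the rate $\hat{\theta}-\theta_0=O_P(n^{-1/2})$ from Theorem \ref{theoremsemipara}, and a uniform gradient bound obtained from the boundedness hypothesis together with (\ref{easy}). Your Steps 2 and 3 merely make explicit the bookkeeping the paper leaves implicit, including the natural reading of the supremum hypothesis as a bound on $\|\nabla_{\theta}\mu_{\theta}(t,z)\|$ over all $\theta\in\Theta^*$.
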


\begin{proof}
This is a direct consequence of a Taylor expansion of $\hat{\mu}_{\hat{\theta}}$ around $\theta_0.$ From Theorem \ref{theoremsemipara} we have $\hat{\theta}-\theta_0=O_P(n^{-1/2}).$ Then, the boundedness of $\nabla_{\theta}\mu_{\theta_0}(t,z)$
and the uniform convergence in assumption (\ref{easy}) give the result.
\end{proof}

As explained in Section 2.2.4, we propose to simultaneously select the adaptive bandwidth $\hat{h}$ and the index parameter $\hat \theta$ in the following way:
\begin{align}\label{double}
(\hat{\theta},\hat{h})=\argmin_{\theta \in \Theta, h\in \mathcal{H}} M_{n,w}(\theta,\hat{\mu}_{\theta,h}).
\end{align}
The uniform in bandwidth consistency of the kernel estimators we use (see Section 6.3\ref{secpreuvenonpara}) ensures us that $\hat{\theta}$ has the same asymptotic properties as in Theorem \ref{theoremsemipara}.
On the other hand, Proposition \ref{proph} below shows that the adaptive bandwidth $\hat{h}$ is asymptotically equivalent to the bandwidth we could obtain using a classical cross-validation technique in the case where the parameter $\theta_0$ is exactly known.

\begin{proposition} \label{proph}
For some positive constants $a,b$ and $\alpha$, let $\mathcal{H}=[an^{-\alpha},bn^{-\alpha}]$ be a set of bandwidths satisfying Assumption \ref{kern} and let
\[h_0=\argmin_{h\in \mathcal{H}} M_{n,w}(\theta_0,\hat{\mu}_{\theta_0,h}).\]
Under the assumptions of Theorem \ref{theoremsemipara} and provided that $\sup_{h\in \mathcal{H},t\in \mathbb{R}_+, z\in \mathcal{Z}}|\hat{\mu}_{\theta,h}(t,\theta 'z)-\mu_{\theta,h}(t,\theta 'z)|=o_{P}(1)$, we have
\[\hat{h}/h_0\rightarrow 1 \text{ a.s}.\]
\end{proposition}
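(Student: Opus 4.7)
The plan is to use a sandwich argument built from the two optimality definitions, reduce the statement to controlling the cost of replacing $\theta_0$ by $\hat{\theta}$ uniformly in $h$, and then exploit the curvature of the cross--validation--type criterion $h\mapsto F_n(\theta_0,h)$ near its minimum.

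First, I would write $F_n(\theta,h)=M_{n,w}(\theta,\hat{\mu}_{\theta,h})$, so that by construction $F_n(\hat{\theta},\hat h)\le F_n(\hat{\theta},h_0)$ and $F_n(\theta_0,h_0)\le F_n(\theta_0,\hat h)$. Combining these two inequalities and adding/subtracting $F_n(\theta_0,h_0)$ yields
\[0\le F_n(\theta_0,\hat h)-F_n(\theta_0,h_0)\le 2\sup_{h\in\mathcal{H}}|F_n(\hat{\theta},h)-F_n(\theta_0,h)|.\]
This reduces the problem to (i) controlling the right-hand side, and (ii) showing that the left-hand side can only be small when $\hat h/h_0$ is close to $1$.

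Next, to bound the right-hand side I would Taylor-expand in $\theta$ around $\theta_0$:
\[F_n(\hat{\theta},h)-F_n(\theta_0,h)=(\hat{\theta}-\theta_0)'\nabla_\theta F_n(\theta_0,h)+\tfrac{1}{2}(\hat{\theta}-\theta_0)'\nabla_\theta^2 F_n(\tilde{\theta}_h,h)(\hat{\theta}-\theta_0).\]
Theorem \ref{theoremsemipara} gives $\hat{\theta}-\theta_0=O_P(n^{-1/2})$, provided the uniform-in-bandwidth consistency stated in Assumption \ref{kern} lets us apply that theorem simultaneously for all $h\in\mathcal{H}$. A repetition of the decomposition used in the proof of Theorem \ref{theoremsemipara}, combined with Lemma \ref{lemmarescale}(2) applied to the enlarged class $\{\nabla_\theta \hat{\mu}_{\theta_0,h}:h\in\mathcal{H}\}$, then yields $\sup_{h\in\mathcal{H}}\|\nabla_\theta F_n(\theta_0,h)\|=O_P(n^{-1/2})$, while the Hessian remains bounded on $\mathcal{H}$ by the uniform convergence hypothesis. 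Consequently
\[\sup_{h\in\mathcal{H}}|F_n(\hat{\theta},h)-F_n(\theta_0,h)|=O_P(n^{-1}).\]

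Finally, I would transfer this bound into convergence of the bandwidth via the curvature of $h\mapsto F_n(\theta_0,h)$ at $h_0$. Using Lemma \ref{lemmarescale}, $F_n(\theta_0,h)$ is asymptotically equivalent to the population least-squares cross-validation-type functional associated with $\mu_{\theta_0}$, integrated against $w$. For kernel estimators of the form \eqref{kernel} with $h\in[an^{-\alpha},bn^{-\alpha}]$, a standard bias-variance analysis (following H\"ardle, Hall \& Marron type arguments as adapted in Delecroix \emph{et al.}~\citeyear{Delecroix06}) shows that $F_n(\theta_0,h)-F_n(\theta_0,h_0)$ is of order strictly larger than $n^{-1}$ as soon as $h/h_0$ stays bounded away from $1$. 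Together with the sandwich inequality, this forces $\hat h/h_0\to 1$ almost surely.

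The main obstacle I anticipate is the curvature step: one must carefully adapt the classical MISE decomposition of least-squares cross-validation to the censored recurrent-events setting, where the rescaled process $\hat Y$, the weight measure $w$, and the Kaplan-Meier correction enter the criterion. The key technical task is to isolate a leading quadratic term in $(h-h_0)/h_0$ whose order dominates the $O_P(n^{-1})$ perturbation coming from the estimation of $\theta_0$; the rest of the argument is essentially bookkeeping through the Taylor expansion and the uniform-in-bandwidth convergence supplied by Assumption \ref{kern}.
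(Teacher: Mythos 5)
Your proposal is correct in outline but follows a genuinely different route from the paper. The paper argues directly on the profiled criterion: it sets $\phi(x)=M_{n,w}(\theta_0,\hat{\mu}_{\theta_0,xh_0})$ and $\phi_n(x)=\min_{\theta}M_{n,w}(\theta,\hat{\mu}_{\theta,xh_0})$, observes that $1$ and $\hat{h}/h_0$ are their respective minimizers over $[a,b]$, and decomposes $\phi_n(x)-\phi(x)$ into terms that vanish uniformly in $x$ (via Lemma \ref{lemmarescale} and the uniform-in-bandwidth consistency of $\hat{\mu}_{\theta,h}$) plus a term independent of $x$, from which it reads off convergence of the argmins. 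You instead sandwich $F_n(\theta_0,\hat h)-F_n(\theta_0,h_0)$ between $0$ and $2\sup_h|F_n(\hat{\theta},h)-F_n(\theta_0,h)|$, quantify the latter as $O_P(n^{-1})$ by a Taylor expansion in $\theta$ together with Theorem \ref{theoremsemipara}, and then invoke curvature of $h\mapsto F_n(\theta_0,h)$ at $h_0$. What your route buys is a sharper, quantitative control ($O_P(n^{-1})$ rather than uniform $o(1)$), which is precisely what makes the curvature step feasible: on the range $h\asymp n^{-\alpha}$ with $\alpha\in(1/8,1/5)$ of Assumption \ref{kern}, the MISE-type scale is $n^{-4\alpha}\vee n^{-1+\alpha}\gg n^{-1}$, so the perturbation from estimating $\theta_0$ cannot move the minimizer by a fixed relative amount. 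What it costs is that the entire burden falls on the curvature/well-separation property of the random map $h\mapsto M_{n,w}(\theta_0,\hat{\mu}_{\theta_0,h})$, which you correctly flag as the main obstacle and only sketch. It is worth noting that the paper's own proof faces the same identification issue (uniform closeness of criteria does not by itself yield convergence of argmins without a well-separated minimum) and simply asserts the conclusion, so your version is no less complete and is more explicit about where the real work lies; note also that both arguments, resting on $o_P$/$O_P$ statements, naturally deliver convergence in probability rather than the almost-sure convergence claimed in the statement.
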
 

\begin{proof}
Define $\phi(h/h_0)=M_{n,w}(\theta_0,\hat{\mu}_{\theta_0,h})$ and $\phi_n(h/h_0)=\argmin_{\theta\in \Theta}M_{n,w}(\theta,\hat{\mu}_{\theta,h})$.
By definition of $h_0$ and $\hat{h}$ we have $\argmin_{x\in [a,b]}\phi(x)=1$ and $\hat{h}/h_0=\argmin_{x\in [a,b]}\phi_n(x)$.
Now write
\begin{align*}
\phi_n(x) &=\phi(x)-\frac{2}{n}\sum_{i=1}^n \int_0^{\tau_H} \hat{Y}_i(t)\big(\hat{\mu}_{\theta,xh_0}(t,\theta 'Z_i)-\hat{\mu}_{\theta,h_0}(t,\theta 'Z_i)\big)dw(t) \\
&\quad + \frac{1}{n}\sum_{i=1}^n \int_0^{\tau_H} \big(\hat{\mu}_{\theta,xh_0}(t,\theta 'Z_i)^2-\hat{\mu}_{\theta,h_0}(t,\theta 'Z_i)^2\big)dw(t)+M_{n,w}(\theta,\hat{\mu}_{\theta,h_0}).
\end{align*}
Using Lemma \ref{lemmarescale} and the uniform in bandwidth consistency of $\hat{\mu}_{\theta,h}$,
the second and third terms in the decomposition tend to zero uniformly in $x$. On the other hand, the last term
does not depend on $x$. This shows that $\hat{h}/h_0\rightarrow 1$ a.s.
\end{proof}

\noindent {\bf 4 Simulations} \label{secsimu}
\setcounter{chapter}{4}
\setcounter{equation}{0} 

We present here some empirical evidence of the good behavior of our semiparametric estimation procedure for finite sample sizes.

\noindent The variables $D_i$ are generated according to a Weibull distribution with parameter $(10,1.1)$ and we consider 4-dimensional covariates $Z_i\sim \otimes^4 \mathcal{U}[1,2]$ for $i=1,\ldots,n$. Conditionally on $Z_i$, the processes $N_i^*(\cdot)$ are generated in the following way: we first generate homogeneous Poisson processes $\widetilde N_i(\cdot)$ with intensity $\theta_0'Z_i+5$ where $\theta_0=(1,1.6,1.25,0.7)'$ and then compute $N_i^*(t)=\widetilde N_i(t\wedge D_i)$ for $i=1,\ldots,n$. This ensures that
\begin{equation*}E[N_i^*(t)|Z_i]=(\theta_0'Z_i+5)\int_0^t (1-F(s-))ds , \quad i=1, \ldots,n,\end{equation*}
such that condition (\ref{modelsemipara}) is verified.
The censoring distribution is selected to be Weibull with parameters $(4,\lambda)$. Taking $\lambda=1.38$ or $\lambda=1$ leads respectively to $30\%$ or $70\%$ of censoring and an average of $11.5$ or $10$ recurrent events per sample. We decide to estimate $\mu_{\theta}$ with the kernel estimator defined in (\ref{kernel}) with the Epanechnikov kernel.
In our simulation study, we emphasize the impact of the two parameters involved in our semiparametric procedure,
namely the bandwidth of the nonparametric kernel estimators and the measure $w.$



First, we consider the case of a fixed bandwidth and show how the adaptive choice of $\hat{w}$ can improve the estimation performance of the parameter $\theta_0$. The kernel estimator is computed using a bandwidth $h_0=1.1$. We consider a set of discrete measures $w(\cdot)$ supported on $\mathcal{I}=\{0.1,0.2,\ldots,1.2\}.$ The range of values of $\mathcal I$ is chosen accordingly to the range of values of the recurrent event times such that $\mathcal{I}$ is a representative subset of the whole trajectory of $N(\cdot)$. With this choice, for any function $f$, the integral with respect to $w$ reduces to a finite sum. Indeed, we have 
\[\int f(t)dw(t)=\sum_{k\in \mathcal{I}}f(k)w(\{k\}).\]
Moreover, we consider an adaptive choice of the weight $w$ among the following family $\{w_a, a=(a_1,\ldots,a_4) \text{ with } a_i\in \{0.25,0.5,0.75,1\},i=1,\ldots,4\}$ with

\[ w_a(\{k\})=\begin{cases}
1 \quad \text{ for } k=0.1,\ldots,0.8\\
  a_i \quad \text{ for } k=0.8+i, i=1,\ldots,4
\end{cases}\]
which makes $256$ possible choices. The intuition is that our procedure should allocate smaller weights to large values of $T_i$ since the behaviour of the Kaplan-Meier estimator is known to be less effective in this part of the distribution (and contributes significantly to the variance). Our estimator $\hat{\theta}_{\hat{w},h_0}$ is then compared to the estimator $\tilde{\theta}$ obtained for the measure $w_0$ which puts mass $1$ at every point of $\mathcal I$. We also compare our estimator $\hat{\theta}_{\hat{w},h_0}$ to the estimators $\hat{\theta}_{\mathrm cox}$ and $\hat{\theta}_{\mathrm AFT}$ obtained respectively under Model \eqref{E:Cox} and Model \eqref{E:AFT2} by Andersen and Gill \citeyearpar{AndersenGill1982} and Ghosh \citeyearpar{Ghosh2004}.

In Tables 1 and 2, we report our results over $1\,000$ simulations of samples of size $100$ for two different rates of censoring ($p=30\%$ and $p=70\%$). Recalling that the first component of $\theta_0$ is imposed to be one, we only have to estimate the three other components. For each estimator, the Mean Squared Error (MSE) $ E(\|\hat{\theta}-\theta_0\|^2)$ is decomposed into bias and variance.

\begin{table}
\caption{Biases, variances and MSE of $\tilde{\theta},\hat {\theta}_{\hat w,h_0},\hat{\theta}_{\mathrm cox}$ and $\hat{\theta}_{\mathrm AFT}$ for $30\%$ of censored data}
\label{30w}
\centering
\begin{tabular}{|l|c|c|c|}
\hline $p=30\%$ & Bias & Variance & MSE \\
\hline \hline $\tilde{\theta}$ & $\left(\begin{array}{c}
0.0816\\
0.0897\\
0.0289 \end{array}\right)$
 & $\left(\begin{array}{ccc}
0.0938 & -0.0122 & -0.0369 \\
-0.0122 & 0.0731 & -0.0211 \\
-0.0369 & -0.0211 & 0.0872
\end{array}\right)$ & 0.2697 \\
\hline  $\hat {\theta}_{\hat w,h_0}$ & $\left(\begin{array}{c}
0.0451\\
 0.0439\\
 0.0291 \end{array}\right)$ & $\left(\begin{array}{ccc}
0.0129 & -0.0034 & -0.0064 \\
-0.0034 & 0.0122 & -0.0054 \\
-0.0064 & -0.0054 & 0.0160
 \end{array}\right)$ & 0.0459\\
\hline  $\hat{\theta}_{\mathrm cox}$ & $\left(\begin{array}{c}
-1.4615\\
 -1.1464\\
 -0.6454 \end{array}\right)$ & $\left(\begin{array}{ccc}
0.0190 & -0.0002 & -0.0011 \\
-0.0002 & 0.0183 & 0.0006 \\
-0.0011 & 0.0006 & 0.0191
 \end{array}\right)$ & 3.9232\\
\hline  
$\hat{\theta}_{\mathrm AFT}$ & $\left(\begin{array}{c}
-5.7300\\
 -6.4569\\
 -8.2406 \end{array}\right)$ & $\left(\begin{array}{ccc}
1.7817 & -0.7809 & -1.3722 \\
-0.7809 & 1.7994 & -0.9062 \\
-1.3722 & -0.9062 & 3.0520
 \end{array}\right)$ & 149.0650\\
\hline
\end{tabular}
\end{table}

\begin{table}
\caption{Biases, variances and MSE of $\tilde{\theta},\hat {\theta}_{\hat w,h_0},\hat{\theta}_{\mathrm cox}$ and $\hat{\theta}_{\mathrm AFT}$ for $70\%$ of censored data}
\centering
\label{70w}

\begin{tabular}{|l|c|c|c|}
\hline $p=70\%$ & Bias & Variance & MSE \\
\hline \hline $\tilde{\theta}$ & $\left(\begin{array}{c}
0.0957\\
0.0995\\
0.0155 \end{array}\right)$
 & $\left(\begin{array}{ccc}
0.1507 & -0.0067 & -0.1060 \\
-0.0067 & 0.1083 & -0.0305 \\
-0.1060 & -0.0305 & 0.1782
\end{array}\right)$ & $0.4566$ \\
\hline  $\hat {\theta}_{\hat w,h_0}$ & $\left(\begin{array}{c}
0.0460\\
 0.0457\\
 0.0262 \end{array}\right)$ & $\left(\begin{array}{ccc}
0.0179 & -0.0031 & -0.0093 \\
-0.0031 & 0.0141 & -0.0067 \\
-0.0093 & -0.0067 & 0.0210
 \end{array}\right)$ & $0.0579$\\
 \hline  $\hat{\theta}_{\mathrm cox}$ & $\left(\begin{array}{c}
-1.4633\\
 -1.1407\\
 -0.6370 \end{array}\right)$ & $\left(\begin{array}{ccc}
0.0218 & 0.0000 &  -0.0008 \\
0.0000 & 0.0223 & 0.0004\\
-0.0008 & 0.0004 & 0.0211
 \end{array}\right)$ & $3.9132$\\
\hline  
$\hat{\theta}_{\mathrm AFT}$ & $\left(\begin{array}{c}
-5.8857\\
 -6.5324\\
 -7.8799 \end{array}\right)$ & $\left(\begin{array}{ccc}
1.3032 & -0.6241 & -0.9075 \\
-0.6241 & 1.4428 & -0.7425 \\
-0.9075 & -0.7425 & 2.1692
 \end{array}\right)$ & $144.3226$ \\
\hline
\end{tabular}
\end{table}

We also computed the average weights of $\hat{w}$ for the last four points of $\mathcal I$. For $30\%$ of censoring, we have: $E[\hat{w}(\{0.9\})]=0.685, E[\hat{w}(\{1\})]=0.579, E[\hat{w}(\{1.1\})]=0.586$ and $E[\hat{w}(\{1.2\})]=0.569$ and for $70\%$ of censoring, $E[\hat{w}(\{0.9\})]=0.684,E[\hat{w}(\{1\})]=0.606, E[\hat{w}(\{1.1\})]=0.593$ and $E[\hat{w}(\{1.2\})]=0.538$. Clearly, choosing the measure from the data improves both the bias and the variance of our estimator. Moreover the weights of $\hat{w}(\{k\})$ get smaller for large values of $k$, especially when the proportion of censored data is high. Consequently, the adaptive measure seems to have a significant impact on the quality of the estimation of $\theta_0$. 
Note that even if the Cox estimator has very small variance components, our estimators have significantly smaller biases and MSE than the Cox and AFT estimators. This suggests that the Cox and AFT models do not fit well the data and consequently give really poor estimates of $\theta_0$. Since our model assumptions are less restrictive, this explains why our estimator outperforms the Cox and AFT estimators.

Next, we show how the choice of the parameter $h$ influences the quality of estimation. We consider the fixed measure $w_0$ which puts the same weights $1$ at each point. The bandwidth $\hat{h}$ is chosen adaptively in a regular grid of length $0.05$ in the set $[0.2,1.8].$ The average bandwidths over the $1\,000$ samples were computed and equal to $1.141$ for $30\%$ of censoring and $1.126$ for $70\%$ of censoring.

The performance of the resulting estimator $\hat{\theta}_{w_0,\hat h}$ presented in Table 3 is then compared with the estimators of the previous tables. We observe significant improvement of its MSE compared to $\tilde{\theta}$. As previously, our estimator outperforms $\hat{\theta}_{\mathrm cox}$ and $\hat{\theta}_{\mathrm AFT}$ in term of bias and MSE. We also see that choosing an adaptive bandwidth with a fixed measure or choosing an adaptive measure with a fixed bandwidth leads to a similar quality of estimation of $\theta_0$. However this is no longer true for a high censoring rate: for $70 \%$ of censored data, the MSE of $\hat {\theta}_{\hat w,h_0}$ is almost $2$ times lower than the MSE of $\hat{\theta}_{w_0,\hat h}$.
This shows that the adaptive measure is well suited to the case of censored data: when a large proportion of recurrent events are censored the adaptive measure can compensate the lack of observations due to censoring and allows us to obtain a very accurate estimation of $\theta_0$. 

More simulations results are presented in the Supplementary Material paper with a different setup. We consider a recurrent event process where the number of events in a time interval has a negative binomial distribution. This entails an increase in the variance estimates but other conclusions are similar.\\
\begin{table}[htb]
\caption{Biases, variances and MSE of $\hat {\theta}_{w_0,\hat h}$ for $30\%$ and $70\%$ of censored data}
\label{3070h}
\centering
\begin{tabular}{|l|c|c|c|}
\hline  & Bias & Variance & MSE \\
\hline \hline $\hat{\theta}_{w_0,\hat h}$, $p=30\%$& $\left(\begin{array}{c}
0.0405  \\
0.0384\\
0.0393 \end{array}\right)$
 & $\left(\begin{array}{ccc}
0.0143 & -0.009 & -0.0108 \\
-0.009 & 0.0143 & -0.0095 \\
-0.0108 & -0.0095 & 0.0150
\end{array}\right)$ & $0.0483$ \\
\hline  $\hat{\theta}_{w_0,\hat h}$, $p=70\%$ & $\left(\begin{array}{c}
  0.0395\\
  0.0383\\
  0.0362 \end{array}\right)$ & $\left(\begin{array}{ccc}
0.0391& -0.0228& -0.0113\\
-0.0228& 0.0208& -0.0094\\
-0.0113& -0.0094& 0.0357
 \end{array}\right)$ & $0.1099$\\
\hline
\end{tabular}
\end{table}

\noindent {\bf 5 Conclusion} \label{secconcl}
\setcounter{chapter}{5}
\setcounter{equation}{0} 

We proposed a new procedure to estimate the conditional cumulative mean function of a recurrent event process. We considered both parametric and semiparametric models for the conditional cumulative mean function. Our semiparametric single-index model can be seen as a generalization of both the Cox model and the accelerated failure time model. Moreover, a new feature of our procedure stands in the measure $w$ involved in our estimators which is designed to prevent us from problems in the tail of the distribution due to the presence of censoring. Then, we proposed a data-driven method to choose this measure adaptively. Our criterion is based on the minimization of the mean squared error for the estimation of $\theta_0$ but our procedure is flexible enough to allow the use of any other criteria more adapted to the context. For example, we could consider a criterion directly based on the error of the estimation of $\mu$.

In this work, we mainly focused on kernel estimators for estimating the nonparametric part of our model, providing methods to choose the smoothing parameter from the data. Nevertheless, all our results are valid for any nonparametric estimator of $\mu_{\theta}$ provided it satisfies some convergence properties (see Assumption \ref{aunifconv}).
Hence, other kinds of estimators may be used provided they satisfy these conditions.\\

\noindent {\bf 6 Appendix} \label{secappend}\\
\noindent {\bf 6.1 Exposition and discussion of assumptions} \label{secsecdisc}
\setcounter{chapter}{6}
\setcounter{section}{0}
\setcounter{equation}{0} 

In this section we state the technical assumptions on which the proofs of the results of Section 3\ref{secasympt} are based. We first present some general properties on a class of functions in order to use empirical processes theory.\\
Let $\mathcal{F}=\{f:(t,z)\in [0,\tau_H]\times \mathcal{Z}\mapsto f(t,z)\}$ be a class of functions with envelope $\bar F$ i.e. such that $|f(t,z)|\leq |\bar F(t,z)|$ for every $(t,z)$ and $f$.  Define, for a probability measure $Q$, the norm $\|\cdot\|_{p,Q}$ as the norm of $L^p(Q)$. The covering number of the class $\mathcal{F}$ for the measure $Q$ denoted by
$N(\varepsilon,\mathcal{F},\|\cdot\|_{p,Q})$ is the smallest number of $L^p(Q)-$balls of radius $\varepsilon$ needed
to cover the set $\mathcal{F}.$ The uniform covering number is defined as $N(\varepsilon,\mathcal{F},\|\cdot\|_{p})=\sup_{Q}N(\varepsilon\|\bar F\|_{p,Q},\mathcal{F},\|\cdot\|_{p,Q})$
where the supremum is taken over all probability measures. 
In what follows, we say that a class of functions $\mathcal{F}$ is a $\|\cdot\|_{p}-VC-$class of functions if there exists two positive constants $\gamma$ and $c$ such that $N(\varepsilon,\mathcal{F},\|\cdot\|_{p})\leq c\varepsilon^{-\gamma}$.
Moreover, a class of functions for which the uniform law of large number holds true is said to be Glivenko Cantelli and a class of functions for which the uniform central limit theorem holds true is said to be Donsker.
We refer the reader to Van der Vaart and Wellner \citeyearpar{Vaart96} for more details on these definitions. 

\noindent A class of functions $\mathcal F$ is said to satisfy one of the following properties if the corresponding condition holds.

\begin{property} \label{avc} 
For a class of functions $\mathcal{F}=\{f:(t,z)\in [0,\tau_H]\times \mathcal{Z}\mapsto f(t,z)\}$
and for any $\tau<\tau_H,$ define
\[\mathcal{F}_{\tau}=\{f(t,\cdot),t\in [0,\tau]\},\]
which is a set of functions defined on $\mathcal{Z}.$ Then, for any $\tau<\tau_H,$ $\mathcal{F}_{\tau}$ is a $VC$-class of functions.
\end{property}

\begin{property}\label{aclasse}
For a class of functions $\mathcal F=\{f:(t,z)\in [0,\tau_H]\times \mathcal{Z}\mapsto f(t,z)\}$, the family of functions defined by $\{(z,y) \mapsto  \int_0^{\tau_H} y(t)f(t,z)dw(t), f\in \mathcal{F}, w\in \mathcal{W}\}$ is Glivenko Cantelli. 
\end{property}

In Section 4 in Supplementary material, we give a general type of sufficient conditions to fulfill these properties. It is easy to check that these technical assumptions are verified when the following conditions hold simultaneously:
\begin{itemize}
\item[-] $\mathcal F$ is a class of polynomial functions $f(t,z)$ (with bounded coefficients),
\item[-] $dE[Y(t)]=g(t) dt$ for some polynomial function $g(t)$,
\item[-] the class of measures is of the form $\mathcal W=\{w:dw(t)=W(t)d\tilde{w}(t)\}$ where $W(t)$ is a decreasing function (of order $t^{-k}$ for $k$ sufficiently high or exponential) and where $\tilde{w}$ belongs to a class of monotone positive uniformly bounded functions sufficiently small (for example, piecewise constant bounded functions with a finite number of jumps).
\end{itemize}

\begin{property}\label{lipsch}
Let $\mathcal F=\{f_{\theta}:(t,z)\in [0,\tau_H]\times \mathcal{Z}\mapsto f_{\theta}(t,z),\theta \in \Theta\}$ be a family of functions indexed by $\theta$. For any $f_{\theta_1},f_{\theta_2}\in \mathcal F$ and $z\in \mathcal{Z},$ we have
\[\sup_{w\in \mathcal{W}}\int_0^{\tau_H} \|f_{\theta_1}(t,z)-f_{\theta_2}(t,z)\|dw(t)\leq c\|\theta_1-\theta_2\|,\]
where $c$ is a positive constant.
\end{property}

\noindent We now introduce the assumptions needed to derive the asymptotic normality of $\hat{\theta}$ in the parametric and semiparametric models. 

\bigskip

\noindent {\textbf{Assumptions for the parametric model.}}

\noindent In the estimation procedures, we consider integrated versions of the rescaled process with respect to a measure $w$ belonging to a class of measures $\mathcal W$. Detailed comments on this family and its role in the statistical procedure are discussed in Section 3.4\ref{secsecadaptw}.  We need the following assumption for this class of measures.

\begin{assum}  \label{aw} 
Assume there exists some probability measure $w_0$ and a positive constant $c_0$ such that, for any $w\in \mathcal{W},$
\[\int_{t}^{\tau_H} dw(s) \leq c_0 W_0(t),\]
where $W_0(t)=\int_{t}^{\tau_H} dw_0(s)$ can be written as
\[W_0(t)=W_1(t)W_2(t)\] 
where $W_1$ and $W_2$ are two positive and non-increasing functions satisfying
\begin{enumerate}[(1)]
\item $\int_0^{\tau_H} W_1^2(t)(1-F(t-))^{-1}(1-G(t-))^{-2}dG(t)<\infty,$\item $\int_0^{\tau_H} W_2(t)E[dN^*(t)]<\infty,$
\item $\lim_{t\to\tau_H}W_2(t)=0.$
\end{enumerate}
\end{assum}

In particular, Assumption \ref{aw} holds when all the measures $w$ have their support included in a common compact
subspace strictly included in $[0,\tau_H]$. On the other hand, since the function $W_1$ controls $1-\hat{G}(s-)$ in $\hat{Y}(s)$ for $s$
in the vicinity of the tail of the distribution, Assumption \ref{aw} also allows to consider measures $w$ which are
supported in the whole interval $[0,\tau_H]$. Taking $W_1(t)=(1-H(t-))^{1/2}(1-G(t-))^{\varepsilon}$ for some $\varepsilon>0$ would be sufficient to obtain $(1)$. Moreover, in the case where $\tau_H=\infty$, if we suppose that, for $\beta_1>0$, we have $E[N^*(t)]\sim \beta_1 t$ when $t\to\infty$, we could take for example $W_2(t)=t^{-\beta_2}$ for $\beta_2>1$ to fulfill $(2)$ and $(3)$.\\

\noindent We also need the following Hölder condition on the process $N$. This is a technical assumption used in the proof of our main lemma.
\begin{assum}\label{holder}
Suppose there exists $\gamma>0$ such that
\[E\left[\sup_{t\leq \tau,t'\leq \tau}\frac{|N(t)-N(t')|}{|t-t'|^{\gamma}}\right]<\infty.\]
\end{assum}


%

Let $\nabla_{\theta}\mu_{0}(s,z;\theta_1)$ (resp. $\nabla^2_{\theta}\mu_{0}(s,z;\theta_1)$) denote the vector of partial derivatives (resp. the Hessian matrix) of $\mu_{0}(s,z;\theta)$ with respect to all the components of $\theta$ evaluated at $\theta_1.$ The following assumption can be understood as a regularity assumption on the regression model.
\begin{assum} \label{ainversible1}
Assume that, for all $w\in \mathcal{W},$ the matrix\\
$\Sigma_{w,p}=\int_0^{\tau_H}E[ \nabla_{\theta}\mu_0(t,Z,\theta_0)\nabla_{\theta}\mu_0(t,Z,\theta_0)']dw(t)$
is invertible. Moreover, assume that the classes of functions $\{\mu_0(\cdot,\cdot;\theta), \theta \in \Theta\}$, $\{\nabla_{\theta}\mu_0(\cdot,\cdot;\theta), \theta \in \Theta\}$ and 
$\{\nabla^2_{\theta}\mu_0(\cdot,\cdot;\theta), \theta \in \Theta\}$ satisfy Properties \ref{avc}, \ref{aclasse} and \ref{lipsch}.
\end{assum}

\bigskip

\noindent {\textbf{Additional assumptions for the semiparametric model.}}\\
\noindent The following assumption is similar to Assumption \ref{ainversible1}. Here, $\nabla_{\theta}\mu_{\theta_1}(s,z)$ (resp. $\nabla^2_{\theta}\mu_{\theta_1}(s,z)$) denotes the vector of partial derivatives (resp. the Hessian matrix) of $\mu_{\theta}(s,\theta'z)$ with respect to all the components of $\theta$ evaluated at $\theta_1$. Note that the gradient vector $\nabla_{\theta}\mu_{\theta_1}(s,z)$ does not only depend on $\theta'z$ but also depends on the whole vector $z$. We give an explicit expression of this gradient in Lemma 5, in Supplementary material.

\begin{assum} \label{ainversible2}
Assume that, for all $w\in \mathcal{W}$, the matrix \\$\Sigma_{w,sp}=\int_0^{\tau_H}E[\nabla_{\theta}\mu_{\theta_0}(t,Z)\nabla_{\theta}\mu_{\theta_0}(t,Z)']dw(t)$
is invertible. Moreover, assume that the classes of functions $\{\mu_{\theta}(\cdot,\cdot), \theta \in \Theta\}$, $\{\nabla_{\theta}\mu_{\theta}(\cdot,\cdot), \theta \in \Theta\}$ and 
$\{\nabla^2_{\theta}\mu_{\theta}(\cdot,\cdot), \theta \in \Theta\}$ satisfy Properties \ref{avc}, \ref{aclasse} and \ref{lipsch}.
\end{assum}

\noindent This assumption is hard to check in practice, since the family of functions $\{\mu_{\theta}:\theta\in \Theta\}$
may have a complex form, which may be impossible to determine explicitly without additional assumptions on the model. Nevertheless, such kind of assumptions are commonplace in the single-index literature and cannot easily be removed. Indeed, they ensure some regularity of the map $\theta\rightarrow \mu_{\theta}(\cdot,\cdot)$ with respect to $\theta.$ Without such regularity assumptions, performing single-index estimation is hopeless, since any error of estimation of $\theta_0$ will be amplified by the irregularity of the map $\theta\rightarrow \mu_{\theta}(\cdot,\cdot).$ In practice, looking at the stability of the estimated functions $\hat{\mu}_{\theta}$ for different values of $\theta$ may inform if this assumption is likely to hold.

In order to enable a data-driven procedure, we need uniform convergence properties for the nonparametric estimators $\hat{\mu}_{\theta}.$
\begin{assum} \label{aunifconv}
Define $\bar{\mu}_{\theta}(t,u)=\sup(\mu_{\theta}(t,u),1)$.
\begin{enumerate}[(1)]
\item  Assume that
\begin{align*}
\sup_{t\leq T_{(n)},\theta\in \Theta, z\in \mathcal{Z}}\left|\frac{\hat{\mu}_{\theta}(t,\theta 'z)-\mu_{\theta}(t,\theta 'z)}{\bar{\mu}_{\theta_0}(t,\theta_0'z)^{\lambda_1+\lambda_2}}\right| & =o_P(1), \\
\sup_{t\leq T_{(n)},\theta\in \Theta, z\in \mathcal{Z}}\left\|\frac{\nabla_{\theta}\hat{\mu}_{\theta}(t,z)-\nabla_{\theta}\mu_{\theta}(t,z)}{\bar{\mu}_{\theta_0}(t,\theta_0'z)^{\lambda_1+\lambda_2}}\right\| & =o_P(1), \\
\sup_{t\leq T_{(n)},\theta\in \Theta, z\in \mathcal{Z}}\left\|\frac{\nabla_{\theta}^2\hat{\mu}_{\theta}(t,z)-\nabla_{\theta}^2\mu_{\theta}(t,z)}{\bar{\mu}_{\theta_0}(t,\theta_0'z)^{\lambda_1+\lambda_2}}\right\| & =o_P(1),
\end{align*}
where $\lambda_1,\lambda_2$ are such that $\lambda_1+\lambda_2\geq 1.$
\item Assume also that
\begin{align*}
\sup_{t\leq T_{(n)}, z\in \mathcal{Z}}\left|\frac{\hat{\mu}_{\theta_0}(t,\theta_0'z)-\mu_{\theta_0}(t,\theta_0'z)}{\bar{\mu}_{\theta_0}(t,\theta_0'z)^{\lambda_1+\lambda_2}}\right| &= O_P(\varepsilon_n), \\
\sup_{t\leq T_{(n)},z\in \mathcal{Z}}\left\|\frac{\nabla_{\theta_0}\hat{\mu}_{\theta_0}(t,z)-\nabla_{\theta}\mu_{\theta_0}(t,z)}{\bar{\mu}_{\theta_0}(t,\theta_0'z)^{\lambda_1+\lambda_2}}\right\| &= O_P(\varepsilon'_n),
\end{align*}
\end{enumerate}
\end{assum}
where $\varepsilon_n\varepsilon_n'=o_P(n^{-1/2}).$

\begin{assum}
Assume that
\[\sup_{z\in\mathcal Z}\int_0^{\tau_H}\mu_{\theta_0}(t,\theta_0'z)^{2(\lambda_1+\lambda_2)}dw(t)<\infty,\]
where $\lambda_1,\lambda_2$ were defined in Assumption \ref{aunifconv}.
\end{assum}

\noindent The following assumption is essential to the empirical processes theory used in our proofs.
We assume that the nonparametric estimators and $\mu_{\theta_0}$ belong to some Donsker classes of functions.

\begin{assum} \label{aregularite}
Assume that there exists some Donsker classes of functions $\mathcal{G}$ and $\mathcal{H}$ such that for all $w\in \mathcal W$
\begin{align*}
(z,y)\longmapsto \int_0^{\tau_H} \big(\mu_{\theta_0}(t,\theta_0'z)-y(t)\big)\nabla_{\theta}\mu_{\theta_0}(t,z)dw(t) & \in \mathcal{G}, \\
z \longmapsto \int_0^{\tau_H} \mu_{\theta_0}(t,\theta_0'z)\nabla_{\theta}\mu_{\theta_0}(t,z)dw(t) & \in \mathcal{H}.
\end{align*}
Moreover, assume that, almost surely for $n$ large enough,
\begin{align*}
(z,y)\longmapsto \int_0^{\tau_H} (\mu_{\theta_0}(t,\theta_0'z)-y(t))\nabla_{\theta}\hat{\mu}_{\theta_0}(t,z)dw(t) & \in \mathcal{G}, \\
z \longmapsto \int_0^{\tau_H} \hat{\mu}_{\theta_0}(t,\theta_0'z)\nabla_{\theta}\mu_{\theta_0}(t,z)dw(t) & \in \mathcal{H}.
\end{align*}
\end{assum}

To give examples of such kind of classes, consider $\mathcal{F}$ and $\mathcal{W}$ as defined in the discussion following Property \ref{aclasse} and suppose, in addition, that the functions $(t,u)\to W_0(t)f(t,u)$ for $f\in \mathcal{F}$ ($f$ is defined on $\mathbb{R}^2$ since $\theta_0'z\in \mathbb{R}$) are twice continuously differentiable with bounded derivatives up to order 2. 

Defining $\mathcal{F}'=\{(u,y)\rightarrow \int_0^{\tau_H}(f_1(t,u)-y(t))f_2(t,u)dw(t),w\in \mathcal{W},f_1,f_2\in \mathcal{F}\},$ it follows from the results of Section 4 in Supplementary material and from the decomposition of the gradient vector $\nabla_{\theta}\mu_{\theta_0}(t,z)$ obtained in Lemma 5 
that  we can decompose
$\int_0^{\tau_H} (\mu_{\theta_0}(t,\theta_0'z)-y(t))\nabla_{\theta}\hat{\mu}_{\theta_0}(t,z)dw(t)=\phi_1(\theta_0'z,y)+z\phi_2(\theta_0'z,y),$
with $\phi_1$ and $\phi_2$ in $\mathcal{F}'.$ A similar decomposition can be used on $\int_0^{\tau_H} \hat{\mu}_{\theta_0}(t,\theta_0'z)\nabla_{\theta}\mu_{\theta_0}(t,z)dw(t).$ Hence, we can consider the class of functions $\mathcal{H}=\mathcal{G}=\mathcal{F}'+z\mathcal{F}'.$\\

\noindent {\bf 6.2 Proof of Lemma \ref{lemmarescale}}  \label{secgrossproof}

Let
\[S^{T_{(n)}}_{n}(f,w)=\frac{1}{n}\sum_{i=1}^n \int_{0}^{T_{(n)}}Y_i(t)f(Z_i,t)dw(t).\]
Write
\begin{align*}
\hat{S}_n(f,w) &= S^{T_{(n)}}_n(f,w)+\frac{1}{n}\sum_{i=1}^n\int_0^{T_{(n)}} f(Z_i,t) \int_0^t \frac{\big(\hat{G}(s-)-G(s-)\big)dN_i(s)}{\big(1-G(s-)\big)\big(1-\hat{G}(s-)\big)}dw(t) \\
&= S^{T_{(n)}}_n(f,w)+R_n(f,w).
\end{align*}
Decompose $f$ into its positive and negative parts denoted respectively by $f^+$ and $f^-$. The expectations of the two resulting sums $S^{T_{(n)}}_n(f^+,w)-S_n(f^+,w)$ and $S^{T_{(n)}}_n(f^-,w)-S_n(f^-,w)$ go to zero faster than $n^{-1/2}$ using Lebesgue's dominated convergence. This entails that
\[\sup_{f\in \mathcal{F},w\in \mathcal{W}} |S^{T_{(n)}}_n(f,w)-S_n(f,w)|=o_P(n^{-1/2}).\]

Let $\tau<\tau_H$ and define $w_{\tau}(t)=w(t)I(t\leq \tau).$ On $[0,\tau],$ we use the asymptotic i.i.d. expansion of the Kaplan-Meier estimator $\hat{G}$ proposed by Gijbels and Veraverbeke~\citeyearpar{Gijbels91} which can also be deduced from Stute~\citeyearpar{Stute95}:
\[\frac{\hat{G}(t)-G(t)}{1-G(t)}=\frac{1}{n}\sum_{j=1}^n \eta_t(T_j,\delta_j)+\tilde{R}_n(t),\]
where $\sup_{t\leq \tau}|\tilde{R}_n(t)|=O_P(n^{-1}\log n)$ and
\[\eta_t(T,\delta)=\frac{(1-\delta)I(T\leq t)}{1-H(T-)}-\int_0^{t}\frac{I(T\geq s)dG(s)}{\big(1-H(s-)\big)\big(1-G(s-)\big)}.\]
Moreover, recall that $\sup_{t\leq \tau}|\hat{G}(t)-G(t)|=O_P(n^{-1/2})$ (see Gill~\citeyearpar{Gill2}, Theorem 2.1) and that $\sup_{t\leq \tau}(1-G(t))(1-\hat{G}(t))^{-1} = O_P(1)$ (see Gill~\citeyearpar{Gill2}, Lemma 2.6). Then, write
\[R_n(f,w_{\tau})=\frac{1}{n^2}\sum_{i,j} \int_0^{T_{(n)}} f(Z_i,t) \int_0^t \frac{\eta_{s-}(T_j,\delta_j)dN_i(s)}{1-G(s-)}dw_{\tau}(t)+R'_n(f,w_{\tau}).\]
Using the fact that $\mathcal{F}$ is an uniformly bounded class, that $\int dw_{\tau}\leq c_0$ from Assumption \ref{aw}
and that $E[N_i(\tau)]< \infty$ for all $\tau,$ we deduce that $\sup_{f,w}|R'_n(f,w_{\tau})|=O_P(n^{-1})$.
The first term in $R_n(f,w_{\tau})$ can be rewritten as
\[\frac{1}{n}\sum_{j=1}^n \int_0^{{T_{(n)}}} \int_0^t \eta_{s-}(T_j,\delta_j)E\big[f(Z,t)d\mu(s|Z)\big]dw_{\tau}(t)+\int \left(\frac{1}{n^2}\sum_{i,j} \psi^{f,t}(Z_i,N_i,T_j,\delta_j)\right)dw_{\tau}(t),\]
where
\[\psi^{f,t}(Z_i,N_i,T_j,\delta_j)=\int_0^t \eta_{s-}(T_j,\delta_j)\left\{\frac{f(Z_i,t)dN_i(s)}{1-G(s-)}-E\big[f(Z,t)d\mu(s|Z)\big]\right\}.\]
Observe that, with probability tending to one, the upper bound $T_{(n)}$ in the integrals can be replaced by $\tau<\tau_H.$
Let $f,f'\in \mathcal{F}$ and $t,t'\in[0,\tau]$. We have
\begin{align}\nonumber
|\psi^{f,t}(Z_i,N_i,T_j,\delta_j)-\psi^{f',t'}(Z_i,N_i,T_j,\delta_j)| & \leq c_{\tau}\bigg(\|f-f'\|_{\infty}N_i(\tau)\\
 & \quad \left.+|t-t'|^{\gamma}\sup_{t,t'\leq \tau}\frac{N_i(t)-N_i(t')}{|t-t'|^{\gamma}}\right),\label{lip}
\end{align}
where $c_{\tau}<\infty$ and $\gamma>0$. Let $\mathcal{H}_{\tau}$ denote the set of all functions $\psi^{f,t}$ when $f$ ranges $\mathcal{F}$ and $t$ ranges $[0,\tau].$
It follows from (\ref{lip}) and Assumption \ref{holder} that $\mathcal{H}_{\tau}$ is a $\|\cdot\|_2-$VC-class of functions.
From this, using the Glivenko-Cantelli property of $\mathcal{H}_{\tau},$
\[\sup_{f,t\leq \tau}\left|\frac{1}{n^2}\sum_{i=1}^n\psi^{f,t}(Z_i,N_i,T_i,\delta_i)\right|=O_P(n^{-1})\]
and 
\[\sup_{f,t\leq \tau}\left|\frac{1}{n^2}\sum_{i\neq j}\psi^{f,t}(Z_i,N_i,T_j,\delta_j)\right|=O_P(n^{-1}),\]
since this can be seen as the supremum of a second order degenerate $U-$process indexed by $\mathcal{H}_{\tau}$ (see Sherman~\citeyearpar{Sherman94}).
This leads to the i.i.d. representation for $\hat{S}_n(f,w_{\tau})$ for any $\tau<\tau_H.$

\noindent Similarly, write
\[\hat{S}_{n}(\hat{f},w_{\tau})=S^{T_{(n)}}_n(\hat{f},w_{\tau})+R_n(\hat{f}-f,w_{\tau})+R_{n}(f,w_{\tau})\]
and using the fact that $\sup_{f\in \mathcal{F}}\|f-\hat{f}\|_{\infty}=o_P(1)$
and that $\sup_{t\leq \tau}|\hat{G}(t)-G(t)|=O_P(n^{-1/2})$, we deduce that
$\sup_{f,w}|R_n(\hat{f}-f,w_{\tau})|=o_P(n^{-1/2})$. The representation for $\hat{S}_{n}(\hat{f},w_{\tau})$ follows.

Now, we make $\tau$ tend to $\tau_H.$ Let
$\hat{P}_{n}(f,w)=\hat{S}_{n}(f,w)-S^{T_{(n)}}_{n}(f,w)$ and
$P^{\tau}_{n}(f,w)=\hat{S}_n(f,w_{\tau})-S^{T_{(n)}}_n(f,w_{\tau}).$ Since the class $\mathcal{F}$
is uniformly bounded, we get
\begin{align*}
|\hat{P}_{n}(f,w)-P^{\tau}_{n}(f,w)| &\leq \frac{M}{n}\sum_{i=1}^n \int_{\tau}^{T_{(n)}}\!\!\int_0^t\frac{|\hat{G}(s-)-G(s-)|}{\big(1-G(s-)\big)\big(1-\hat{G}(s-)\big)} dN_i(s)dw(t)\\
& \leq \frac{M'}{n} \sum_{i=1}^n\int_{0}^{T_{(n)}} \frac{W_0(s\vee\tau)|\hat{G}(s-)-G(s-)|dN_i(s)}{\big(1-G(s-)\big)\big(1-\hat{G}(s-)\big)},
\end{align*}
(with $a\vee b$ denoting the maximum between $a$ and $b$) where the last inequality is obtained from Fubini's theorem and Assumption \ref{aw}.
From Theorem 1.2 in Gill~\citeyearpar{Gill2}, Assumption \ref{aw} and the fact that $\sup_{t\leq T_{(n)}}(1-G(t-))(1-\hat{G}(t-))^{-1}=O_P(1)$ (see again Gill~\citeyearpar{Gill2}, we get that
\[|\hat{P}_{n}(f,w)-P^{\tau}_{n}(f,w)|\leq \frac{A_n}{n}\sum_{i=1}^n \int_{0}^{T_{(n)}} \frac{W_2(s\vee\tau)dN_i(s)}{1-G(s-)},\]
where $A_n=O_P(n^{-1/2}).$ The result follows from Lemma 6 in Supplementary material.\\

\noindent {\bf 6.3 Assumptions for the uniform convergence of the nonparametric estimators} \label{secpreuvenonpara}

In the supplementary material, we show that the kernel estimator $\hat{\mu}_{\theta,h}$ defined by (\ref{kernel}) satisfies the convergence rates required by Assumption \ref{aunifconv}. This requires the following assumption on the kernel and the bandwidth.
\begin{assum}\label{kern}
Assume that
\begin{enumerate}[(1)]
\item $K$ has a compact support, say $[-1,1]$, $\int_{\mathbb{R}}K(s)ds =1$ and $\sup_x|K(x)|<\infty$,
\item $K$ is a twice differentiable and second-order kernel with derivatives of order 0, 1 and 2 of bounded variation,
\item $\mathcal{K}:=\{K\big((x-\cdot)/h\big):h>0, x\in \mathbb{R}^d\}$ is a pointwise measurable class of functions,
\item $h\in \mathcal{H}_n\subset [an^{-\alpha},bn^{-\alpha}]$ with $a,b>0$
and $\alpha \in (1/8,1/5).$
\end{enumerate}
\end{assum}

From the definition of our estimator, problems arise when studying $\hat{\mu}_{\theta,h}$ for $t$ in the tail of the distribution. This is a common problem when studying Kaplan-Meier estimators but it can be circumvented by some moment conditions on the response and censoring distribution. For instance, in the classical censored framework, Stute~\citeyearpar{Stute95} used the function $C_G$ to compensate the bad behavior of the Kaplan Meier estimator in the tail of the distribution.
Therefore we also require the following assumption, which gives a similar moment condition but adapted to our recurrent event context. 

\begin{assum}\label{cens}
Assume that, for some $\varepsilon>0$,
\[\sup_{t,u}\frac{C_G(t)^{7/20+\varepsilon}}{\bar{\mu}_{\theta_0}(t,u)^{\lambda_1}}<\infty\]
and
\[\sup_{t,u}\frac{\int_0^t\big(1-G(s-)\big)E[N^*(s)dN^*(s)]}{\big(1-G(t-)\big)^2\bar{\mu}_{\theta_0}(t,u)^{2\lambda_2}}<\infty,\]
where $\lambda_1$ and $\lambda_2$ are defined in Assumption \ref{aw}.
\end{assum}

These conditions allow us to consider a process $N^*$ and variables $D$ and $C$ that are supported on the whole interval $[0,\tau_H]$. 
However they will hold true only if there is enough information on the recurrent event process in the tails of the distribution. 
For further illustration take $\mu^k_{\theta_0}(t,u)\sim c_k(1-G(t))^{-\beta_1}$, for $k=1$ and $2$, for $t$ in a neighborhood of $\tau_H$, $u \to \infty$ and where $c_1$, $c_2$, $\beta_1$ are three positive constants. Take also, for $c_3>0$ and $\beta_2>0$, $1-F(t)\sim c_3(1-G(t))^{\beta_2}$ for $t$ in a neighborhood of $\tau_H$. Then it can be shown that these conditions are verified for example in the case $\beta_1>1$, $\beta_2=1$ and $\lambda_1=\lambda_2=1$.
\par

\bibliographystyle{chicago}
\bibliography{BGLbibli}

\end{document}